\titlespacing*{\section}{0pt}{14pt}{4pt}
\titlespacing*{\subsection}{0pt}{8pt}{3pt}
\def\maketimestamp{\count255=\time
\divide\count255 by 60\relax
\edef\thetime{\the\count255:}%
\multiply\count255 by-60\relax
\advance\count255 by\time
\edef\thetime{\thetime\ifnum\count255<10 0\fi\the\count255}
\edef\thedate{\number\day-\ifcase\month\or Jan\or Feb\or Mar\or
             Apr\or May\or Jun\or Jul\or Aug\or Sep\or Oct\or
             Nov\or Dec\fi-\number\year}
\def\timstamp{\hbox to\hsize{\tt\hfil\thedate\hfil\thetime\hfil}}}
\numberwithin{equation}{section}  
\newtheorem{theorem}{Theorem}[section]
\newtheorem{lemma}[theorem]{Lemma}
\newtheorem{proposition}[theorem]{Proposition}
\newtheorem{corollary}[theorem]{Corollary}
\theoremstyle{definition}
\newtheorem{definition}[theorem]{Definition} 
\newtheorem{example}{Example}
\theoremstyle{remark}
\newtheorem{remark}{Remark}
\DeclareMathOperator{\Span}{span} %
\DeclareMathOperator{\diagonal}{diag} %
\DeclareMathOperator{\exponential}{e}
\newcommand{\myexp}[1]{\exponential^{#1}}
\newcommand{\diag}[1]{\diagonal{(#1)}}
\newcommand{\id}[1][]{I_{#1}} 
\newcommand{\card}[1]{\# {#1}}
\newcommand{\union}{\cup}
\newcommand*{\numbersys}[1]{\ensuremath{\mathbb{#1}}}
\newcommand*{\C}{\numbersys{C}}
\newcommand*{\R}{\numbersys{R}}
\newcommand*{\Z}{\numbersys{Z}}
\newcommand*{\N}{\numbersys{N}}
\newcommand*{\scalarfield}[1]{\ensuremath{\mathbb{#1}}}
\newcommand*{\K}{\scalarfield{K}}
\newcommand{\itvco}[2]{\ensuremath{\left[{#1},{#2}\right)}} %
\newcommand{\abs}[1]{\ensuremath{\left\lvert#1\right\rvert}}
\newcommand{\abssmall}[1]{\ensuremath{\lvert#1\rvert}}
\newcommand{\norm}[2][]{\ensuremath{\left\lVert#2\right\rVert_{#1}}}
\newcommand{\innerprod}[3][]{\ensuremath{\left\langle #2,#3\right\rangle_{\! #1}}}
\newcommand{\innerprodbig}[3][]{\ensuremath{\bigl \langle #2,#3\bigr\rangle_{\!\!#1}}}
\newcommand{\innerprods}[2]{\ensuremath{\langle #1,#2\rangle}}
\newcommand{\set}[1]{\ensuremath{\left\lbrace{#1}\right\rbrace}}
\newcommand{\setprop}[2]{\ensuremath{\left\lbrace{#1} : {#2}\right\rbrace}}
\newcommand{\setsmall}[1]{\ensuremath{\lbrace{#1}\rbrace}}
\newcommand{\setpropbig}[2]{\ensuremath{\bigl\lbrace{#1} :
      {#2}\bigr\rbrace}}
\newcommand{\setpropsmall}[2]{\ensuremath{\lbrace{#1} : {#2}\rbrace}}
\newcommand{\floor}[1]{\left\lfloor #1 \right\rfloor}
\renewcommand*\env@matrix[1][c]{\hskip -\arraycolsep
  \let\@ifnextchar\new@ifnextchar
  \array{*\c@MaxMatrixCols #1}}
\newlength{\bracewidth}
\newcommand{\myunderbrace}[2]{\settowidth{\bracewidth}{$#1$}#1\hspace*{-1\bracewidth}\smash{\underbrace{\makebox{\phantom{$#1$}}}_{#2}}}
\newcommand{\ie}{i.e.,\xspace} 
\newcommand{\eg}{e.g.,\xspace} 
\DeclareMathOperator{\HTF}{HTF}
\DeclareMathOperator{\STF}{STF}
\def\blfootnote{\xdef\@thefnmark{}\@footnotetext} 
\def\subjclass{\xdef\@thefnmark{}\@footnotetext}
\long\def\symbolfootnote[#1]#2{\begingroup%
\def\thefootnote{\fnsymbol{footnote}}\footnote[#1]{#2}\endgroup} 
  \renewenvironment{abstract}{%
      \titlepage
      \null\vfil
      \@beginparpenalty\@lowpenalty
      \begin{center}%
        \bfseries \abstractname
        \@endparpenalty\@M
      \end{center}}%
     {\par\vfil\null\endtitlepage}
  \renewenvironment{abstract}{%
      \if@twocolumn
        \section*{\abstractname}%
      \else
        \small
        \list{}{%
          \settowidth{\labelwidth}{\textbf{\abstractname:}}
          \setlength{\leftmargin}{50pt}
          \setlength{\rightmargin}{50pt}
          \setlength{\itemindent}{\labelwidth}
          \addtolength{\itemindent}{\labelsep}
        }
        \item[\textbf{\abstractname:}]

      \fi}
      {\if@twocolumn\else\endlist\fi}
\begin{document}

\title{Prime  tight frames}

\author{Jakob Lemvig\footnote{Technical University of Denmark, Department of Mathematics, Matematiktorvet 303, 2800 Kgs. Lyngby, Denmark, E-mail: \url{J.Lemvig@mat.dtu.dk}}\phantom{$\ast$}, Christopher Miller\footnote{8803 Pisgah Drive Clinton MD, 20735 USA, E-mail: \url{christopherdmiller5@gmail.com}}, and Kasso A.~Okoudjou\footnote{Department of Mathematics, University of Maryland, College Park, MD, 20742 USA, E-mail: \url{kasso@math.umd.edu}}}

\date{\today}

\maketitle 
\blfootnote{2010 {\it Mathematics Subject Classification.} Primary 42C15; Secondary 11A07}
\blfootnote{{\it Key words and phrases.} divisible frames, equiangular tight frames, frames, harmonic tight frames,  prime frames, spectral tetris frames, tight frames}

\thispagestyle{plain}
\begin{abstract} We introduce a class of finite tight frames called prime tight frames and prove some of their elementary properties. In particular, we show that any finite tight frame can be written as a union of prime tight frames.  We then characterize all prime harmonic tight frames and use this characterization to suggest effective analysis and synthesis computation strategies for such frames.  Finally, we describe all prime frames constructed from the spectral tetris method, and, as a byproduct, we obtain a characterization of when the spectral tetris construction works for redundancies below two. 
\end{abstract}

\section{Introduction}
\label{sec:introduction}
A frame for a finite dimensional vector space is a spanning set that is not necessarily a basis. More
specifically, let $\K$ denote either $\R$ or $\C$, and consider $\K^N$, $N\geq 1$, as a vector space over the
scalar field $\K$.  Given $M\geq N$, a collection of vectors $\Phi=\{\varphi_i\}_{i=1}^M\subset\K^N$ is called
a \emph{finite frame} for $\K^N$ if there are two constants $0<A\le B$ such that
\begin{equation}\label{eq:frameeq}
A\|x\|^2 \le \sum_{i=1}^M |\innerprod{x}{\varphi_i}|^2 \le B\|x\|^2 ,\quad\text{for all $x\in\K^N$.}
\end{equation}
If the frame bounds $A$ and $B$ are equal, the frame $\Phi=\{\varphi_i\}_{i=1}^M$ is called a \emph{finite
  tight frame}. We refer to $\Phi=\{\varphi_i\}_{i=1}^M$ as a \emph{finite unit norm tight frame} (FUNTF), if
$\Phi$ is a tight frame with $\|\varphi_i\|=1$ for each $i$. In this case, the frame bound is $A=M/N$. For a
tight frame $\Phi=\{\varphi_i\}_{i=1}^M$ with frame bound $A$, the following reproducing formula holds
\begin{equation}\label{eq:reconstruction_for_frames}
x =  \frac1A \sum_{i=1}^M  \innerprod{x}{\varphi_i}\varphi_i,\quad \text{for all $x\in\K^N$.}
\end{equation}

The decomposition formula provided by frames has important consequences in many areas of science and
engineering in which they now play an increasingly important role. We refer to \cite{chris03, koche1, koche2}
for an overview of frames and some of their applications.  In particular, tight frames and FUNTFs have
attracted a lot of attention in recent years due to their numerous applications.  In this context methods for
characterizing and constructing these types of frames are being actively investigated. For instance, the
existence and characterization of FUNTFs was settled by Benedetto and Fickus in \cite{bf03} as minimizers of a
potential function.

In this paper we are interested in the classification of the tight frames that remain tight after the deletion
of frame vectors. For (non-tight) frames, this problem is known as the erasure problem for frames and was
first investigated by Goyal, Kova\v{c}evi\'{c}, and Kelner \cite{Goyal:2001aa}, and later by Casazza and
Kova\v{c}evi\'{c} \cite{cako03}. The focus in these works was whether any set of vectors of a given size can
be removed from a frame to still leave a frame. In this case, estimates for frame bounds after erasures were
given.

By contrast, we investigate tight frames which remain such after the erasures of \emph{some} set of frame
vectors. In the process, we define a new class of tight frames called \emph{prime tight frames} as tight
frames for which no proper sub-collection is a tight frame.  In Section~\ref{sec:def}, we analyze the
structure of prime tight frames. In particular, we show that prime tight frames are fundamental building
blocks for all tight frames in the sense that any tight frame can be written as a union of prime tight
frames. This is, in a way, analog to the prime factorization of natural numbers; however, this factorization
of tight frames into prime ones is not unique.  In Section~\ref{sec:special-classes}, we then restrict our
attention to FUNTFs and characterize the prime tight frames for three families of FUNTFs: equiangular tight
frames, harmonic tight frames, and spectral tetris frames. Equiangular and harmonic tight frames have proven
to be some of the most useful frames in a variety of applications.  For harmonic tight frames (HTF), \ie tight
frames constructed from an $M \times M$ discrete Fourier matrix by keeping the first $N$ rows, this
characterization leads to effective analysis and synthesis computation strategies for HTFs.  The spectral
tetris method was recently introduced by Casazza, Fickus, Mixon, Wang, and Zhou~\cite{cfmwz11} and has already
received considerable attention in the frame theory community. As a byproduct of our results on prime spectral
tetris frames, we are able to characterize all dimensions $N$ and number of frame vectors $M$ for which the
spectral tetris construction works. This was previously only known for redundancies larger than two.

\section{Basic facts of prime and divisible tight frames }
\label{sec:def}
We define prime and divisible tight frames and prove some of their properties in
Section~\ref{subsec:def-prop}. In Section~\ref{sec:prime-exist-dense}, we then prove that prime frames exist
in every dimension which allows us to conclude that prime tight frames form an open, dense subset of the set
of all tight frames.

\subsection{Definitions and elementary properties}\label{subsec:def-prop}

\begin{definition}\label{def:ptf}
  Let $M\geq N$ be given. A tight frame $\Phi=\{\varphi_i\}_{i=1}^{M}$ in $\K^N$ is said to be \emph{prime},
  if no proper subset of $\Phi$ is a tight frame for $\K^N$. If $\Phi$ is not prime, we say that it is
  \emph{divisible}. In particular, given an integer $p$ with $N\leq p \leq M-N$, the tight frame $\Phi$ is
  $(M, p)$-divisible if there exists a subset of $\Phi$ containing $p$ vectors that is also a tight frame.
\end{definition}

\begin{remark}\label{lem:atleast-2-times-dimension}
  For a tight frame $\Phi=\{\varphi_i\}_{i=1}^{M}$ in $\K^N$ to be prime it is sufficient that $M< 2N$. In
  other words, for $\Phi$ to be divisible it is necessary that $M\geq 2N$. This follows from the fact that, if
  $M < 2N$, it is impossible to partition $\Phi$ into two spanning sets.
\end{remark}

We also note that there exist frames $\Phi$ that are $(M, p)$-divisible for all $p$ in the full range $N \le p
\le M-N$, \eg choosing $\Phi$ to be the harmonic tight frame in $\C^2$ with $24$ frame vectors. On the other
hand, it is not possible for a tight frame to be robust with respect to tightness for any $p$ erasures, where
$p \in \N$ is fixed. Hence within the class of tight frames the erasure problem has no solution.  This
negative result and a simple symmetry observation are stated in the next proposition.
  
\begin{proposition}
  Let $M\geq N\geq 2$ be given. Let $\Phi=\{\varphi_i\}_{i=1}^{M}$ be a tight frame in $\K^N$.
\begin{compactenum}[(i)]
\item $\Phi$ is $(M, p)$-divisible if and only if $\Phi$ is $(M, N-p)$-divisible.
\item Given $N\leq p \leq M-N$, there exists no $(M, p)$-divisible tight frame $\Phi$ in $\K^N$ for which
  every sub-collection of $\Phi$ consisting of $p$ vectors is a tight frame.
\end{compactenum}    
\end{proposition}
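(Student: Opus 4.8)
The plan is to express everything through the frame operator $\sum_{i}\varphi_i\varphi_i^{\ast}$, which for a tight frame is a positive scalar multiple of the identity, and then to exploit the additivity of this operator over disjoint sub-collections.

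For part (i), suppose $\Phi$ is $(M,p)$-divisible, so some subset $\Gamma\subseteq\Phi$ of cardinality $p$ is a tight frame, say with bound $B$, i.e. $\sum_{\varphi\in\Gamma}\varphi\varphi^{\ast}=B\,\id[N]$. Since $\Phi$ is tight with bound $A$ we have $\sum_{\varphi\in\Phi}\varphi\varphi^{\ast}=A\,\id[N]$, and subtracting shows that the complementary collection $\Phi\setminus\Gamma$, which consists of exactly $M-p$ vectors, has frame operator $(A-B)\,\id[N]$. This is again a scalar multiple of the identity, and it is strictly positive definite because a tight frame has positive frame bound (so $A-B>0$; the only way $A=B$ could occur is the degenerate case where every discarded vector is zero, which is excluded by $\Phi\setminus\Gamma$ being a genuine frame). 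Hence $\Phi\setminus\Gamma$ is a tight frame, so $\Phi$ is $(M,M-p)$-divisible. The admissible range $N\le p\le M-N$ is symmetric under $p\mapsto M-p$, and the reverse implication is obtained by the identical computation with the roles of $\Gamma$ and its complement exchanged.

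For part (ii), I would argue by contradiction: assume that \emph{every} $p$-element sub-collection of $\Phi$ is a tight frame. The key device is a single-vector exchange. Fix two distinct indices $j\neq k$. Because $N\le p\le M-N$ forces $1\le p\le M-1$, there is a $p$-subset $S$ with $\varphi_j\in S$ and $\varphi_k\notin S$; then $S'\defined(S\setminus\{\varphi_j\})\cup\{\varphi_k\}$ is a second $p$-subset. By hypothesis both are tight, with frame operators $c\,\id[N]$ and $c'\,\id[N]$ respectively, and subtracting the two identities yields
\begin{equation*}
\varphi_k\varphi_k^{\ast}-\varphi_j\varphi_j^{\ast}=\lambda\,\id[N],\qquad \lambda\defined c'-c.
\end{equation*}
If $\lambda>0$ then $\varphi_k\varphi_k^{\ast}=\varphi_j\varphi_j^{\ast}+\lambda\,\id[N]$ is positive definite of rank $N\ge2$, whereas its left-hand side has rank at most $1$ — impossible; the case $\lambda<0$ is symmetric. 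Therefore $\lambda=0$ and $\varphi_k\varphi_k^{\ast}=\varphi_j\varphi_j^{\ast}$ for every pair $j,k$. Equality of these rank-one positive operators forces all $\varphi_i$ to lie on a single common line, so $\Span \Phi$ is at most one-dimensional, contradicting that $\Phi$ spans $\K^N$ with $N\ge2$.

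I expect the main obstacle to be the combinatorial bookkeeping that legitimizes the exchange for the full admissible range of $p$ — namely confirming that a $p$-subset containing $\varphi_j$ but not $\varphi_k$ always exists — together with the low-dimensional case $N=2$, where a naive ``rank $\le2$ versus rank $N$'' count is inconclusive and one genuinely needs the \emph{positive-definiteness} of $\varphi_j\varphi_j^{\ast}+\lambda\,\id[N]$ rather than a bare rank estimate. The only remaining delicacy is the zero-vector degeneracy in part (i), which is dispatched by the observation that a tight frame has strictly positive frame bound.
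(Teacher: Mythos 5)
Your proposal is correct, and your argument for part (ii) takes a genuinely different, more unified route than the paper's; part (i) follows the paper's own complement device but contains one circular step worth repairing.

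On (ii): both you and the paper hinge on the same single-vector exchange, but the paper compares the two tight-frame identities as scalar equations $\sum_i |\langle x,\varphi_i\rangle|^2 = c\|x\|^2$ and is thereby forced into a case split. In the equal-norm case the two subsets have the same bound, giving $|\langle x,\varphi_\ell\rangle| = |\langle x,\varphi_k\rangle|$ for all $x$, hence $|\langle \varphi_k,\varphi_\ell\rangle| = 1$ and collinearity via the equality case of Cauchy--Schwarz; in the non-equal-norm case the bounds differ, and evaluating at $x=\varphi_p$ and $x=\varphi_k$ produces $\|\varphi_p\|^4-\|\varphi_k\|^4=\bigl(\|\varphi_p\|^4-\|\varphi_k\|^4\bigr)/N$, forcing $N=1$. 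Your operator identity $\varphi_k\varphi_k^{\ast}-\varphi_j\varphi_j^{\ast}=\lambda I_N$ dispatches both situations at once: $\lambda\neq 0$ is impossible because $\varphi_j\varphi_j^{\ast}+|\lambda|\, I_N$ is positive definite of rank $N\ge 2$ while the other side of the rearranged identity has rank at most one (and you are right that for $N=2$ it is positive definiteness, not a bare rank count of the difference, that closes the case), while $\lambda=0$ makes all the rank-one operators $\varphi_i\varphi_i^{\ast}$ coincide, so every frame vector lies on a single line, contradicting spanning. What your version buys: no equal-norm versus non-equal-norm dichotomy, and tightness of $\Phi$ itself is never used, only that $\Phi$ spans, so the conclusion is marginally more general. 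What the paper's version buys: the obstruction is exhibited as an explicit numerical identity, entirely within the frame-inequality formalism.

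On (i) (note that ``$(M,N-p)$'' in the statement is a typo for ``$(M,M-p)$'', which you implicitly corrected): your complement computation is exactly the argument the paper uses to prove its Theorem 2.4, and the paper omits the proof of (i) as trivial. However, your exclusion of the degenerate case $A=B$ is circular: you rule it out ``because $\Phi\setminus\Gamma$ is a genuine frame,'' which is precisely the conclusion being established. The correct repair is the trace identity $N(A-B)=\sum_{\varphi\in\Phi\setminus\Gamma}\|\varphi\|^2$, which is positive as soon as the discarded vectors are not all zero; under the usual convention that frame vectors are nonzero, this finishes the proof. Without that convention the statement genuinely fails: in $\R^2$, the three cube-roots-of-unity unit vectors together with three zero vectors form a tight frame that is $(6,4)$-divisible but not $(6,2)$-divisible --- a degeneracy the paper itself (which elsewhere allows zero vectors) passes over silently.
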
  
  
\begin{proof} 
  (i): The proof of this part is trivial and so we omit it. \\
  (ii): First assume that $\Phi=\{\varphi_k\}_{k=1}^{M}$ is a FUNTF that is $(M, p)$-divisible and such that
  every sub-collection of $p$ vectors is again a FUNTF. Let $\Phi_1=\{\varphi_i\}_{i \in J}$, where $J =
  \set{1,\dots,p}$, be the tight frame of the first $p$ vectors from $\Phi$.  We replace the $\ell$th frame
  vector in $\Phi_1$ by the $k$th vector in $\Phi$, where $p+1\leq k \leq M$, and denote this tight frame
  $\Phi_2:=\{\varphi_i\}_{i \in J\setminus\set{\ell}} \cup \{\varphi_{k}\}$. Then, given any $x \in \K^N$, we
  have
  \[
  \tfrac{p}{N}\|x\|^2 = \sum_{i \in J}\abs{\innerprod{x}{\varphi_i}}^2 =
  \sum_{i \in J\setminus \set{\ell}}\abs{\innerprod{x}{\varphi_i}}^2 + \abs{\innerprod{x}{\varphi_k}}^2.
  \]
  This implies that $\abs{\innerprod{x}{\varphi_{\ell}}}=\abs{\innerprod{x}{\varphi_k}}$. Since $\ell$ and $k$
  are arbitrary, we have that $\abs{\innerprod{x}{\varphi_\ell}}=\abs{\innerprod{x}{\varphi_k}}$ for each $x
  \in \K^N$, each $\ell =1, 2, \hdots, p$, and each $p+1\leq k\leq M$. In particular, we have that
  $\abs{\innerprod{\varphi_k}{\varphi_\ell}}=1$ for $1 \le \ell \le p$ and $p+1 \le k \le M$. Since the
  vectors $\varphi_i$ are unit-norm, for a fixed $k=k_0 \in \set{p+1,\dots,M}$, this implies that
  $\varphi_\ell \in \Span{(\varphi_{k_0})}$ for all $1 \le \ell \le p$.  Hence, the span of $\Phi_1$ is
  one-dimensional which contradicts the assumption that $\Phi_1$ is a tight frame. We conclude that there
  exist no FUNTF that is $(M, p)$-divisible and for which all sub-collections of $p$ vectors are also
  FUNTF. This proof carries over also to the case of equal norm tight frames.
  
  Now suppose that $\Phi=\{\varphi_{i}\}_{i=1}^{M}$ is an $(M, p)$-divisible non-equal norm, tight frame such
  that any subset of $p$ vectors is again a tight frame. Assume without loss of generality that
  $\|\varphi_1\|\geq \|\varphi_2\|\geq \hdots \ge \|\varphi_M\|.$ Let $\Phi_1=\{\varphi_i\}_{i=1}^{p}$ be the
  subset of the first $p$ vectors from $\Phi$, and let $\Phi_{2}=\{\varphi_i\}_{i=1}^{p-1} \cup
  \{\varphi_{k}\}$ where $p+1\leq k \leq M$, $p$ being the first index such that $\|\varphi_p\|>
  \|\varphi_k\|$. Then, given any $x \in \K^N$, we have
  \[
  \tfrac{\sum_{i=1}^{p}\|\varphi_i\|^2}{N}\|x\|^2 = \sum_{i=1}^{p}\abs{\innerprod{x}{\varphi_i}}^2\qquad
  \text{and} \qquad \tfrac{\sum_{i=1}^{p-1}\|\varphi_i\|^2 + \|\varphi_k\|^2}{N}\|x\|^2 =
  \sum_{i=1}^{p-1}\abs{\innerprod{x}{\varphi_i}}^2 + \abs{\innerprod{x}{\varphi_k}}^2.
  \]
  This implies that $ \abssmall{\innerprods{x}{\varphi_p}}^2 -
  \abs{\innerprod{x}{\varphi_k}}^2=\tfrac{\|\varphi_p\|^2 - \|\varphi_k\|^2}{N} \|x\|^2 $ for each $x \in
  \K^N$. If we choose $x=\varphi_p$, we find that
  \[
  \norm{\varphi_p}^4 - \abssmall{\innerprods{\varphi_p}{\varphi_k}}^2 = \tfrac{\|\varphi_p\|^2 -
    \|\varphi_k\|^2}{N} \|\varphi_p\|^2,
  \]
  and similarly, for $x=\varphi_k$, we have
  \[
  \abssmall{\innerprods{\varphi_p}{\varphi_k}}^2-\|\varphi_k\|^4 =
  \tfrac{\|\varphi_p\|^2 - \|\varphi_k\|^2}{N} \|\varphi_k\|^2.
  \]
  Combining these two equations we obtain
  \[
  \|\varphi_p\|^4 - \|\varphi_k\|^4 = \tfrac{\|\varphi_p\|^4 - \|\varphi_k\|^4}{N},
  \]
  which leads to $N=1$. This contradiction concludes the proof.
\end{proof}

The following result shows that if we can take out a subset of vectors from a tight frame such that these
vectors form a tight frame, then what is left is automatically also a tight
frame. 

\begin{theorem}
\label{thm:splits-into-two-tight-frames}
Let $M\geq 2N$. Suppose $\Phi=\{\varphi_k\}_{k=1}^{M}$ is a divisible tight frame for $\K^N$, and let $\Phi_1
\subsetneq \Phi$ denote a tight frame for $\K^N$. Then $\Phi_2=\Phi \setminus \Phi_1$ is also a tight frame
for $\K^N$.

Moreover, given $M\geq N$, every tight frame of $M$ vectors in $\K^N$ is a finite union of prime tight frames.
\end{theorem}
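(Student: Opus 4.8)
The plan is to exploit the frame operator $S_\Phi\colon \K^N\to\K^N$, $S_\Phi x=\sum_{k=1}^{M}\innerprod{x}{\varphi_k}\varphi_k$, together with the standard characterization that a family $\Phi$ is a tight frame with bound $A$ \ifft $S_\Phi=A\,\id[N]$ (the self-adjoint operator $S_\Phi$ being a positive multiple of the identity is equivalent to $\sum_k\abs{\innerprod{x}{\varphi_k}}^2=A\norm{x}^2$ for all $x$). The single fact driving the first assertion is that the frame operator is additive over disjoint unions: if $\Phi=\Phi_1\cup\Phi_2$ as indexed families, then $S_\Phi=S_{\Phi_1}+S_{\Phi_2}$, since the defining sum splits over the two index sets.

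For the first part I would argue as follows. Write $A$ for the frame bound of $\Phi$ and $A_1$ for that of $\Phi_1$, so that $S_\Phi=A\,\id[N]$ and $S_{\Phi_1}=A_1\,\id[N]$. Additivity then gives
\[
S_{\Phi_2}=S_\Phi-S_{\Phi_1}=(A-A_1)\,\id[N],
\]
so $\Phi_2$ is a tight frame precisely when $A-A_1>0$. To establish this strict inequality I would compare traces: using the standard identity that the trace of a frame operator equals the sum of the squared norms of its vectors, $\operatorname{tr}S_\Phi=NA=\sum_{k=1}^{M}\norm{\varphi_k}^2$ and likewise $NA_1=\sum_{\varphi\in\Phi_1}\norm{\varphi}^2$, so subtracting yields $N(A-A_1)=\sum_{\varphi\in\Phi_2}\norm{\varphi}^2$. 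Since $\Phi_1\subsetneq\Phi$ is proper, $\Phi_2$ is nonempty, and as its vectors are nonzero the right-hand side is strictly positive. Hence $A-A_1>0$, and the displayed equation exhibits $\Phi_2$ as a tight frame with bound $A-A_1$.

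For the \emph{moreover} statement I would use strong induction on the number $M$ of frame vectors. If $\Phi$ is prime there is nothing to prove, as $\Phi$ is itself the union of the single prime tight frame $\Phi$. If $\Phi$ is divisible, then by Remark~\ref{lem:atleast-2-times-dimension} we have $M\geq 2N$, and there is a proper tight subframe $\Phi_1\subsetneq\Phi$; by the first part $\Phi_2=\Phi\setminus\Phi_1$ is also a tight frame. Both $\Phi_1$ and $\Phi_2$ are nonempty (each spans $\K^N$, hence has at least $N$ vectors) and both are proper subsets, so each contains strictly fewer than $M$ vectors. The induction hypothesis therefore applies to each, writing $\Phi_1$ and $\Phi_2$ as finite unions of prime tight frames; concatenating these expresses $\Phi=\Phi_1\cup\Phi_2$ as a finite union of prime tight frames.

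The arguments are short once the frame operator is in hand, and I expect no genuine obstacle; the one point that needs care is the strict positivity $A-A_1>0$, which is exactly what rules out the complementary frame bound degenerating to zero, and the bookkeeping in the induction ensuring that both pieces are genuinely smaller than $\Phi$. As noted in the introduction, the resulting factorization into primes is far from unique, but only its existence is asserted here.
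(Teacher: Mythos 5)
Your proof is correct and follows essentially the same route as the paper: your frame-operator additivity $S_\Phi = S_{\Phi_1}+S_{\Phi_2}$ is exactly the paper's matrix identity $\Phi\Phi^\ast = \Phi_1\Phi_1^\ast + \Phi_2\Phi_2^\ast = A_1 I_N + \Phi_2\Phi_2^\ast$, and your strong induction on $M$ is the paper's iterative splitting argument, which it terminates via Remark~\ref{lem:atleast-2-times-dimension} in at most $\floor{M/N}$ steps. Your trace argument for the strict positivity $A-A_1>0$ is a welcome extra step of care (the paper simply asserts it), though note it tacitly assumes the vectors of $\Phi_2$ are not all zero --- the same implicit assumption the paper makes.
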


\begin{proof}
  Assume that $\Phi$ and $\Phi_1 \subsetneq \Phi$ are tight frames with frame bounds $A$ and $A_1$,
  respectively, and let $\Phi_2 = \Phi \setminus \Phi_1$. We will now consider $\Phi$ as an $N \times M $
  matrix of the form $\Phi=[\varphi_i]_{i=1}^M \in \mathrm{Mat}(N \times M, \K)$. Hence, after a possible
  reordering of columns, we have that $\Phi = [\Phi_1\, \Phi_2]$, and
  \begin{equation}
    \label{eq:ortho-split-condition}
    AI_N =\Phi \Phi^* =
    \left[\, \Phi_1 \; \Phi_2 \right] 
    \begin{bmatrix}
      \Phi_1^\ast \\ \Phi_2^\ast
    \end{bmatrix} = \Phi_1 \Phi_1^\ast + \Phi_2 \Phi_2^\ast = A_1 I_N + \Phi_2 \Phi_2^\ast,
  \end{equation}
  which implies that $\Phi_2 \Phi_2^\ast = (A - A_1)\, I_N$. Consequently, $\Phi_2$ is tight frame with frame
  bound $A-A_1>0$.

  For the second part, observe that if $\Phi$ is prime, there is nothing to prove. So suppose that $\Phi$ is
  divisible tight frame with frame bound $A$.  Then, by definition, we can partition $\Phi$ into $\Phi =
  \Phi_1 \cup (\Phi\setminus\Phi_1)$, where $\Phi_1 \subsetneq \Phi$ is a tight frame. It follows from the
  proof of the first part of the lemma that $\Phi_2:=\Phi\setminus \Phi_1$ is also a tight frame.  If $\Phi_1$
  and $\Phi_2$ are prime, we are done. If not, repeat the process. In each step of this procedure we split a
  tight frame into two sets of cardinality at least $N$ each. Hence, by
  Remark~\ref{lem:atleast-2-times-dimension}, the procedure terminates after at most $\floor{\frac{M}{N}}$
  steps.
\end{proof}

The second part of Theorem~\ref{thm:splits-into-two-tight-frames} suggests the following definition. 
\begin{definition}
\label{def:factors}
Let $\Phi$ be a tight frame. Suppose, for some $K\in \N$,
\begin{equation}
  \label{eq:factoring}
 \Phi = \Phi_1 \cup \dots \cup \Phi_K ,
\end{equation}
where $\Phi_k$, $k=1,\dots,K$, are prime tight frames. We shall say that $\Phi_k$ are \emph{prime factors} or
\emph{prime divisors} of $\Phi$.
\end{definition}

The prime factors of a frame are, in general, not unique as shown by the following examples.

\begin{example}
\label{exa:factors-not-unique}
\begin{compactenum}[(a)]
\item The $6$th roots of unity FUNTF in $\R^2$, that is, the frame of six unit-norm vectors each $2\pi/6$
  apart as the vertices of a hexagon, factors into two FUNTFs, each of which consists of three vectors. But
  these are not unique since you can choose the two set of three frame vectors in eight different
  ways. However, up to multiplication of individual frame vectors by $-1$ this decomposition is in fact
  unique. 
\item The uniqueness of the factors up to scalar multiplication in part~(a) of the example does not hold in
  general. The harmonic frame of 10 vectors in $\C^2$ decomposes into either five frames of size two or two
  frames of size five. We refer to Section~\ref{sec:harmonic-frames} for more details on this.
\end{compactenum}
\end{example}

Tight frames $\Phi$ and $\Psi$ are \emph{unitarily equivalent} if there is a bijection $p: \set{1,\dots,M} \to
\set{1,\dots,M}$, a unitary $N \times N$ matrix $U$ and a $c_i \in \K$ with $\abs{c_i}=c>0$ such that $\psi_i
= c_i U \varphi_{p(i)}$ for all $i=1,\dots,M$. In matrix notation this reads $\Psi=U \Phi P C$, where $P$ is
the $M \times M$ permutation matrix for $p$ and $C = \diag{c_1,\dots,c_M}$. Note that there exist other
related equivalence notions, \eg in \cite{Chien:2011} $c_i$ is replaced by a fixed positive scalar $c>0$. We
remark that it is only necessary to introduce permutations in the equivalence relations if one considers
frames as sequences of vectors as opposed to non-ordered collections of vectors with repetition allowed. The
following result shows that prime frame are equivalence classes.

\begin{proposition}
\label{thm:unitary-equi}
Suppose $\Phi$ and $\Psi$ are unitarily equivalent tight frames for $\K^N$.  Then $\Phi$ is prime if and only
if $\Psi$ is prime.
\end{proposition}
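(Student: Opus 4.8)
The plan is to exploit the fact that primeness is, by Definition~\ref{def:ptf}, simply the absence of a proper tight subframe, together with the observation that unitary equivalence is a \emph{symmetric} relation on tight frames. Indeed, if $\psi_i = c_i U \varphi_{p(i)}$ for all $i$, then solving for the $\varphi$'s gives $\varphi_j = c_{p^{-1}(j)}^{-1}\, \adj{U} \psi_{p^{-1}(j)}$, which exhibits $\Phi$ as unitarily equivalent to $\Psi$ via the unitary $\adj{U}$, the bijection $p^{-1}$, and scalars of constant modulus $1/c$. Consequently it suffices to prove only one implication: if $\Phi$ is divisible, then so is $\Psi$; the reverse direction then follows by symmetry, and the claimed biconditional for primeness is just the negation of this.

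So suppose $\Phi$ is divisible, and let $\Phi_1 = \set{\varphi_j}_{j \in I}$, with $I \subsetneq \set{1,\dots,M}$, be a proper tight subframe with frame bound $A_1$. I would transport this subset to $\Psi$ through the bijection $p$ by setting $J = p^{-1}(I)$ and $\Psi_1 = \set{\psi_i}_{i \in J}$. Since $p$ is a bijection and $I$ is proper, $J$ is again a proper subset of $\set{1,\dots,M}$, so $\Psi_1 \subsetneq \Psi$.

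It then remains to check that $\Psi_1$ is tight. Using $\psi_i = c_i U \varphi_{p(i)}$, the constant-modulus condition $\abs{c_i}=c$, and the fact that $U$ is unitary (hence norm-preserving), for any $x \in \K^N$ one computes
\[
\sum_{i \in J} \abs{\innerprod{x}{\psi_i}}^2 = c^2 \sum_{i \in J} \abs{\innerprod{\adj{U}x}{\varphi_{p(i)}}}^2 = c^2 \sum_{j \in I} \abs{\innerprod{\adj{U}x}{\varphi_{j}}}^2 = c^2 A_1 \norm{\adj{U}x}^2 = c^2 A_1 \norm{x}^2,
\]
where the second equality reindexes the sum using that $p$ restricts to a bijection from $J$ onto $I$, and the last uses $\norm{\adj{U}x}=\norm{x}$. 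Thus $\Psi_1$ is a tight frame with bound $c^2 A_1$, and $\Psi$ is divisible.

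I do not expect a genuine obstacle here: the content is entirely the invariance of the tight-frame inequality under the three operations defining the equivalence, namely application of $U$, scaling by unimodular-up-to-$c$ factors, and permutation of the index set. The only points requiring care are bookkeeping --- keeping the reindexing through $p$ and $p^{-1}$ straight, and verifying at the outset that the relation is symmetric so that establishing a single implication suffices.
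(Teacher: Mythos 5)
Your proof is correct and takes essentially the same route as the paper: transport a proper tight subframe through the bijection $p^{-1}$, verify tightness of the image using the unitarity of $U$ and the constant modulus of the $c_i$, and obtain the converse implication by symmetry of the equivalence relation. The only cosmetic differences are that the paper phrases the tightness check as the matrix identity $\Psi_{p^{-1}(J)}(\Psi_{p^{-1}(J)})^{\ast} = c^2 A I_N$ rather than via the frame inequality, and that it leaves the symmetry of the relation implicit where you verify it explicitly.
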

\begin{proof}
  We prove that $\Phi$ is divisible if and only if $\Psi$ is divisible.  Assume $\Phi_J := [\varphi_i]_{i \in
    J}$ is a divisor of $\Phi$ for some $J \subset \set{1,\dots,M}$. Then $\Phi_J \Phi_J^{\,\ast}=A I_N$ for
  some $A>0$. Since $\psi_{p^{-1}(i)} = c_{p^{-1}(i)} U \varphi_i$ by assumption, we have that
  \begin{align*}
    \Psi_{p^{-1}(J)} (\Psi_{p^{-1}(J)})^{\ast} = U\Phi_JC_J \, (U\Phi_JC_J )^\ast = c^2 A\, I_N,
  \end{align*}
  where $C_J = \diag{c_{p^{-1}(i)}}_{i \in J}$. This shows that $\Psi$ is divisible with divisor
  $\Psi_{p^{-1}(J)}$. By the symmetry of the equivalence relation, this completes the proof.
\end{proof}

Proposition~\ref{thm:unitary-equi} also holds for the notion of unitarily equivalence used in
\cite{Chien:2011}. However, the result is false if one uses the coarser notion of \emph{equivalence}, where
the matrix $U$ is only assumed to be invertible.

We end this subsection by mentioning that the notion of prime tight frames easily generalizes to non-tight
frames. Recall that if $\Phi$ is a frame with frame operator $S = \Phi \Phi^\ast$, then the associated
canonical Parseval frame is given by $S^{-1/2}\Phi$. Hence, a frame $\Phi$ is said to be \emph{prime} if
its canonical Parseval frame is a prime tight frame.

\subsection{Existence and denseness of prime tight frames}
\label{sec:prime-exist-dense}
We now turn the question of existence of prime tight frames. If $\Phi$ is a union of an orthonormal basis for
$\K^N$ and $M-N$ zero vectors, then $\Phi$ is prime for any $N,M \in \N$. This trivial observation shows the
existence of prime tight frames for all $M\ge N \in \N$. The following result extends this fact to tight
frames of \emph{non-zero} vectors.

\begin{proposition}\label{thm:partial-existence}
  For each dimension $N \in \N$ there exists a prime tight frame for $\K^N$ with $M$ non-zero vectors for any
  $M\ge N \in \N$.
\end{proposition}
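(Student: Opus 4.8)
The plan is to recast primeness as a single vanishing condition on the \emph{traceless parts} of the rank-one operators $\varphi_i\varphi_i^\ast$, and then to locate a prime example by a genericity argument on the manifold of all tight frames. We assume $N\ge 2$ (in dimension one every nonempty family of nonzero scalars is a tight frame, so only $M=N=1$ can occur). If $N\le M<2N$, then Remark~\ref{lem:atleast-2-times-dimension} already makes any tight frame prime, and a harmonic tight frame supplies the required nonzero vectors; so the substantive case is $M\ge 2N$. For $\varphi\in\K^N\setminus\set{0}$ write $P=\varphi\varphi^\ast$ and let $Q=P-\tfrac{\norm{\varphi}^2}{N}I_N$ be its traceless part. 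For a subcollection $\Phi_J$ of \emph{nonzero} vectors, the relation $\sum_{i\in J}P_i=\lambda I_N$ with $\lambda=\tfrac1N\sum_{i\in J}\norm{\varphi_i}^2>0$ forces full rank, so $\Phi_J$ automatically spans; hence $\Phi_J$ is a tight frame for $\K^N$ exactly when $\sum_{i\in J}Q_i=0$. Taking $J=\set{i}$ gives $Q_i=0\iff\varphi_i=0$. Therefore a tight frame of nonzero vectors is prime if and only if no proper nonempty $J\subsetneq\set{1,\dots,M}$ satisfies $\sum_{i\in J}Q_i=0$, a condition that simultaneously encodes $\varphi_i\ne 0$ for every $i$.

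Because primeness is invariant under global scaling, I would normalize the frame bound to $1$ and view a tight frame as a point of $\mathcal{T}_{N,M}=\setprop{\Phi\in\mathrm{Mat}(N\times M,\K)}{\Phi\Phi^\ast=I_N}$, which for $M>N$ is a connected real-analytic manifold. For each proper nonempty $J$ the set $B_J=\setprop{\Phi\in\mathcal{T}_{N,M}}{\textstyle\sum_{i\in J}\varphi_i\varphi_i^\ast\in\R\,I_N}$ is cut out by polynomial equations in the entries of $\Phi$ (the off-diagonal entries of the partial operator vanish and its diagonal entries agree), hence is a closed real-algebraic subset; by the reduction above the prime frames are precisely the points of $\mathcal{T}_{N,M}\setminus\bigcup_J B_J$.

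The crux — and the step I expect to be hardest — is to show that each $B_J$ is a \emph{proper} subset, i.e. that some tight frame has $\sum_{i\in J}\varphi_i\varphi_i^\ast\notin\R\,I_N$. After permuting columns I may take $J=\set{1,\dots,k}$ with $1\le k\le M-1$. I would choose a non-scalar $D$ with $0\preceq D\preceq I_N$, $\rank D\le k$ and $\rank(I_N-D)\le M-k$, and then write $D$ as a sum of $k$ and $I_N-D$ as a sum of $M-k$ nonzero rank-one positive operators, so that their concatenation is a tight frame of nonzero vectors whose first partial operator equals $D$. The delicate point is the simultaneous feasibility of the two rank bounds: for $k\ge N$ one lets $D$ have the eigenvalue $1$ with multiplicity at least $N-(M-k)$ so that $\rank(I_N-D)\le M-k$ (vacuous when $M-k\ge N$), while retaining a smaller eigenvalue to keep $D$ non-scalar; for $k<N$ one takes $\rank D\le k<N$ (so $D$ is non-scalar once nonzero) and notes $\rank(I_N-D)\le N\le M-k$ since $M\ge 2N$. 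As any positive semidefinite matrix of rank $r$ splits into $s\ge r$ nonzero rank-one positive summands, both decompositions exist.

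To finish, each $B_J$ is a proper closed real-algebraic subset of the connected manifold $\mathcal{T}_{N,M}$, hence nowhere dense and Lebesgue-null (were its defining polynomials to vanish on an open set they would vanish identically, forcing $B_J=\mathcal{T}_{N,M}$). The finite union $\bigcup_J B_J$ is then null, so its complement is nonempty (indeed dense); any $\Phi$ in this complement is a tight frame with $M$ nonzero vectors, no proper subcollection of which is a tight frame, that is, a prime tight frame, as required.
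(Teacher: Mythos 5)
Your proposal is correct, but it takes a genuinely different route from the paper. The paper's proof is a short explicit construction: take a Parseval frame $\Psi$ for $\K^{N-1}$ with $M-1$ nonzero vectors, pad it with a zero $N$th row, and append $e_N$ as an $M$th column; the result is Parseval, and it is prime because any tight subcollection must span $\K^N$ and hence contain the unique vector with nonzero last coordinate, so a proper tight subframe and its complement (also tight, by Theorem~\ref{thm:splits-into-two-tight-frames}) cannot both do so. You instead argue by genericity: you encode primeness of a frame of nonzero vectors as the non-vanishing of all proper partial sums of the traceless operators $Q_i$, note that each bad set $B_J$ is a closed real-algebraic subset of the Stiefel manifold, and --- this is your substitute for the paper's explicit example --- prove each $B_J$ is \emph{proper} by realizing a prescribed non-scalar $D$ with $0\preceq D\preceq \id[N]$, $\rank D\le \card{J}$, $\rank(\id[N]-D)\le M-\card{J}$ as a partial frame operator of a Parseval frame (your rank bookkeeping is correct given $M\ge 2N$); the identity theorem for real-analytic functions on the connected manifold $\mathcal{T}_{N,M}$ then makes $\bigcup_J B_J$ nowhere dense and null, so its complement is nonempty. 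The paper's argument is elementary and constructive; yours needs more machinery and produces no concrete frame, but it proves strictly more, namely that prime tight frames with nonzero vectors are dense and of full measure --- in effect recovering Theorem~\ref{thm:prime-frames-dense} without circularity, which is a real point, since the paper's proof of that theorem invokes this very proposition for the non-emptiness of the Zariski-open set $\mathcal{P}(N,M)$, whereas properness of each individual $B_J$ is a much cheaper input than exhibiting one frame avoiding all $B_J$ at once. Two small repairs: over $\K=\R$ a harmonic tight frame does not live in $\R^N$, so for $N\le M<2N$ you should instead cite, say, an orthonormal basis with suitably rescaled repeated vectors (still prime by Remark~\ref{lem:atleast-2-times-dimension}); and your parenthetical on $N=1$ is right --- for $M\ge 2$ no tight frame of nonzero scalars in $\K^1$ is prime, an edge case the paper brushes aside as trivial.
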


\begin{proof}
  The case $N=1$ is trivial, hence we assume $N \ge 2$. Let $\Psi \in \K^{(N-1)\times (M-1)}$ be a Parseval
  frame for $\K^{N-1}$. We now extend $\Psi$ into an $N \times M$ matrix by first adding a $1\times (M-1)$ row
  vector with zeros as a new $N$th row, and then by adding $e_N \in \K^N$ as a new $M$th column. The picture
  is:
  \[ 
  \Phi =
  \begin{bmatrix}
    \ddots & \vdots & \iddots & 0 \\
    \cdots & \Psi  & \cdots & 0\\
    \iddots & \vdots & \ddots & 0 \\
    0 & 0 & 0 & 1
  \end{bmatrix}.
  \]
  The new frame $\Phi$ is a Parseval frame for $\K^N$ since $\Phi \Phi^\ast=\id[N]$. Moreover, it is prime
  since any tight frame $\Phi_1 \subset \Phi$ must contain the $M$th column in order to span $\K^N$.
\end{proof}

\begin{remark}
  For certain values of $N,M$, we can extend Proposition~\ref{thm:partial-existence} to show existence of
  prime \emph{unit-norm} tight frames.  The case $N\leq M< 2N$ follows from
  Remark~\ref{lem:atleast-2-times-dimension}. If $M \ge 2N$ and $M$ is prime, then we take the $M \times M$
  DFT matrix and choose any $N$ rows to be our frame $\Phi$. Since no proper subset of the (primitive) $M${th}
  root of unity sum to zero, there exists no way to divide $\Phi$ into two FUNTFs. Hence, $\Phi$ is a prime
  FUNTF.
\end{remark}

It is easy to build divisible tight frames since the union of any two tight frames is divisible. However, the
existence of prime tight frames allows us to prove that ``most'' tight frame are in fact prime. To state this
result, we need to set some notations. For $A>0$ fixed, let $\mathcal{F}(N,M,A)$ be the set of all tight
frames with frame bound $A$, that is,
\[
\mathcal{F}(N,M,A)=\setprop{\Phi \in \textrm{Mat}(N \times M,\K)}{\Phi\Phi^\ast=A \id[N]}.
\]
Let
\[
\mathcal{P}(N,M,A)=\setprop{\Phi \in \mathcal{F}(N, M, A)}{ \Phi \text{ is prime}}.
\] 
When $A=1$, we simply denote these sets $\mathcal{F}(N,M)$ and $\mathcal{P}(N,M)$.  Note that $\mathcal{F}(N,
M)$ is the Stiefel manifold, see \cite{strawn-frame-varieties}, which is invariant under multiplication by $N
\times N$ unitary matrices from the left and by $M \times M$ unitaries from the right. Consequently, there
exists an invariant Haar probability measure $\mu$ on $\mathcal{F}(N,M)$. The results stated below hold for
any measure that is absolutely continuous with respect to $\mu$.

Following the setup in \cite[Section 3]{fullspark}, each of the entries in a matrix $\Phi \in \mathcal{F}(N,
M)$ is written in terms of its real and imaginary parts: $\varphi_{k,\ell}=x_{k, \ell}+i y_{k, \ell}$ for all
$k, \ell$. In this setting we can consider $\mathcal{F}(N, M)$ as an real algebraic variety in $\R^{2M N}$. Since
$\mathcal{F}(N, M)$ is an irreducible variety, every non-empty Zariski-open subset of $\mathcal{F}(N, M)$ is
open and dense in $\mathcal{F}(N, M)$ in the (induced) standard topology~\cite{fullspark}. Moreover, the
complement of a non-empty Zariski-open subset is of $\mu$-measure zero. The following result says that being
prime within the set of tight frames is a \emph{very generic} property.

\begin{theorem}
\label{thm:prime-frames-dense}
Let $N,M\in \N$ and $A>0$. The set of prime $A$-tight frames $\mathcal{P}(N,M,A)$ is open and dense in the set
of all $A$-tight frames $\mathcal{F}(N,M,A)$ in the (induced) standard topology. Moreover, the complement
$\mathcal{F}(N,M,A) \setminus \mathcal{P}(N,M,A)$ is of measure zero in $\mu$.
\end{theorem}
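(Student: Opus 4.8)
The plan is to identify the prime locus $\mathcal{P}(N,M,A)$ as a non-empty Zariski-open subset of the irreducible variety $\mathcal{F}(N,M,A)$; once this is done, all three assertions (openness, density, and measure-zero complement) follow simultaneously from the property of irreducible real algebraic varieties recalled just before the statement (see \cite{fullspark}). I would first reduce to the Parseval case $A=1$. The scaling map $\Phi \mapsto A^{-1/2}\Phi$ is simultaneously an algebraic isomorphism and a homeomorphism of $\mathcal{F}(N,M,A)$ onto $\mathcal{F}(N,M)$; it transports $\mu$ and, since multiplying every frame vector by one common nonzero scalar neither creates nor destroys tight sub-collections, it preserves the prime/divisible dichotomy. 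Hence it suffices to prove the theorem for $A=1$.

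The crux is to show that the divisible locus $\mathcal{D}:=\mathcal{F}(N,M)\setminus\mathcal{P}(N,M)$ is (relatively) Zariski-closed. For each proper non-empty $J\subsetneq\set{1,\dots,M}$ set $V_J=\setprop{\Phi\in\mathcal{F}(N,M)}{\Phi_J\Phi_J^\ast = \lambda I_N \text{ for some }\lambda\in\R}$, where $\Phi_J=[\varphi_i]_{i\in J}$. Writing $\varphi_{k,\ell}=x_{k,\ell}+iy_{k,\ell}$ as in the setup above, the off-diagonal entries of $\Phi_J\Phi_J^\ast$ and the differences of its diagonal entries are polynomials in the $x_{k,\ell},y_{k,\ell}$, so each $V_J$, and therefore the finite union $\bigcup_J V_J$, is Zariski-closed in $\mathcal{F}(N,M)$. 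I would then prove the exact identity $\mathcal{D}=\bigcup_{\emptyset\neq J\subsetneq\set{1,\dots,M}} V_J$. The inclusion $\mathcal{D}\subseteq\bigcup_J V_J$ is immediate, since a proper tight divisor $\Phi_J$ satisfies $\Phi_J\Phi_J^\ast=\lambda I_N$ with $\lambda>0$. For the reverse inclusion, observe that on $\mathcal{F}(N,M)$ the scalar is forced to be $\lambda=\tfrac1N\sum_{i\in J}\norm{\varphi_i}^2\ge 0$: if $\lambda>0$ then $\Phi_J$ is a proper tight sub-collection and $\Phi$ is divisible, while if $\lambda=0$ then $\Phi_J=0$, whence $\Phi_{J^{c}}\Phi_{J^{c}}^\ast=\Phi\Phi^\ast-\Phi_J\Phi_J^\ast=I_N$, so the complementary sub-collection $\Phi_{J^{c}}$ (with $J^{c}=\set{1,\dots,M}\setminus J$ again proper and non-empty) is itself a proper tight frame and $\Phi$ is once more divisible. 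Thus $\mathcal{D}$ is Zariski-closed, and $\mathcal{P}(N,M)=\mathcal{F}(N,M)\setminus\mathcal{D}$ is Zariski-open.

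Finally I would verify non-emptiness: by Proposition~\ref{thm:partial-existence} there is a prime Parseval frame all of whose vectors are non-zero, and such a frame lies in no $V_J$ (for $\lambda>0$ it would be divisible, and $\lambda=0$ would force one of its vectors to vanish), so $\mathcal{P}(N,M)\neq\emptyset$. Being a non-empty Zariski-open subset of the irreducible variety $\mathcal{F}(N,M)$, $\mathcal{P}(N,M)$ is then open and dense in the standard topology with complement of $\mu$-measure zero, and transporting back through the scaling isomorphism gives the result for general $A>0$. I expect the only delicate point to be the second paragraph: naively $\mathcal{D}$ looks merely semi-algebraic, because genuine tightness requires the open condition $\lambda>0$, which would threaten to destroy \emph{openness} (as opposed to mere density) of $\mathcal{P}$. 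The resolution — and the heart of the argument — is that the extra degenerate stratum $\set{\Phi_J=0}$ swept in by allowing $\lambda=0$ consists entirely of divisible frames as well, through the tightness of the complement $\Phi_{J^{c}}$ in the spirit of Theorem~\ref{thm:splits-into-two-tight-frames}, so that $\mathcal{D}$ is genuinely Zariski-closed rather than only semi-algebraic.
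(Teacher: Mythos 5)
Your proof is correct and takes essentially the same route as the paper's: reduce to $A=1$, express divisibility through polynomial conditions $\Phi_J\Phi_J^{\ast}=\lambda I_N$ indexed by the finitely many subsets $J$, conclude that $\mathcal{P}(N,M)$ is a non-empty (by Proposition~\ref{thm:partial-existence}) Zariski-open subset of the irreducible variety $\mathcal{F}(N,M)$, and then invoke the standard properties of such subsets. You are in fact more careful than the paper at the one delicate point: the paper writes the divisibility condition with $c>0$ and calls it polynomial without further comment, whereas your observation that the degenerate stratum $\lambda=0$ (that is, $\Phi_J=0$) also consists of divisible frames, via tightness of the complementary sub-collection $\Phi_{J^{c}}$, is precisely what makes the divisible locus genuinely Zariski-closed rather than merely semi-algebraic, and hence what makes openness (not just density) of $\mathcal{P}(N,M)$ legitimate.
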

  
\begin{proof}
  We can without loss of generality take $A=1$. Let $\Phi \in \mathcal{F}(N,M)$ and let $S$ be the power set
  of $\set{1,\dots,M}$. We use the notation $\Phi_I=[\varphi_i]_{i\in I}$ for $I \in S$. Now, $\Phi$ is
  divisible if and only if
  \[
  \Phi_I \Phi_I^{\phantom{I}\ast}= c \id[N]
  \] 
  for some $\emptyset \neq I \in S$ and $c >0$. These orthogonality conditions can be expressed as polynomial
  equations in $x_{k,\ell}$ and $y_{k,\ell}$ introduced above. Since $S$ is finite, the set of prime $1$-tight
  frames $\mathcal{P}(N,M)$ is a finite intersection of the complement of such sets in $\mathcal{F}(N,M)$. The
  sub-variety $\mathcal{P}(N,M)$ is therefore Zariski-open in the irreducible variety $\mathcal{F}(N,M)$. By
  Proposition~\ref{thm:partial-existence} the set $\mathcal{P}(N,M)$ is non-empty. Since $\mathcal{P}(N,M)$ is
  a non-empty Zariski-open set in an irreducible variety, the result follows.
\end{proof}

\begin{remark}
  Let $V$ be an $N \times M$ matrix with entries independently drawn at random from an absolutely continuous
  distribution with respect to the Lebesgue measure; a standard choice could be the Gaussian distribution of
  zero mean and unit variance. With probability one, $V$ is a frame and thus performing the Gram-Schmidt
  algorithm on the rows of $V$ leads to a tight frame.  We call such frames for random, tight frames. It can
  be shown from Theorem~\ref{thm:prime-frames-dense} that random tight frames are prime with probability one.
\end{remark}

By Theorem~\ref{thm:prime-frames-dense} we see that if $\Phi$ is a (divisible or prime) tight frame and
$\widetilde{\Phi}$ a random, arbitrarily small perturbation of $\Phi$ such that $\widetilde{\Phi}$ again is
tight, then $\widetilde{\Phi}$ is prime with probability one. From Theorem~\ref{thm:prime-frames-dense} we
also have the following density result.
 
\begin{corollary}
  Every tight frame is arbitrarily close to a prime tight frame.
\end{corollary}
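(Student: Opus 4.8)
The plan is to read this off directly from the density half of Theorem~\ref{thm:prime-frames-dense}; no new construction is needed. First I would fix a tight frame $\Phi=\set{\varphi_i}_{i=1}^M$ for $\K^N$ and extract the two invariants that pin down the ambient set: its frame bound $A>0$ (so that $\Phi\Phi^\ast = A\id[N]$) and its cardinality $M$. By definition this places $\Phi$ inside $\mathcal{F}(N,M,A)$, which is precisely the set in which Theorem~\ref{thm:prime-frames-dense} asserts that the prime tight frames $\mathcal{P}(N,M,A)$ are dense.

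Next I would unwind the meaning of ``dense in the induced standard topology''. Viewing frames as matrices in $\mathrm{Mat}(N\times M,\K)$ equipped with, say, the Frobenius norm, density says that every open ball around $\Phi$ meets $\mathcal{P}(N,M,A)$. Hence, given $\eps>0$, there is a prime tight frame $\Psi\in\mathcal{P}(N,M,A)$ with $\norm{\Phi-\Psi}<\eps$, which is exactly the statement that $\Phi$ is arbitrarily close to a prime tight frame. I would point out that the approximant $\Psi$ automatically has the same number of vectors $M$ and the same frame bound $A$ as $\Phi$, so the approximation happens within the natural fixed-bound, fixed-cardinality stratum rather than by altering $M$ or $A$.

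The only point requiring a moment's care --- and the reason I fix $A$ at the outset rather than normalizing --- is that Theorem~\ref{thm:prime-frames-dense} is phrased for a prescribed frame bound, so one must check that $\Phi$ genuinely lies in the set where density is asserted. This is immediate once $A$ is chosen to be the actual frame bound of $\Phi$. (Alternatively one could reduce to the Parseval case $A=1$ via the scaling $\Phi\mapsto A^{-1/2}\Phi$, which maps $\mathcal{F}(N,M,A)$ homeomorphically onto $\mathcal{F}(N,M)$ and preserves primality, since it is a special case of the unitary equivalence of Proposition~\ref{thm:unitary-equi}, and then transport the approximant back; but invoking the $A$-version of the theorem directly avoids even this bookkeeping.) I do not expect any substantive obstacle here: the entire content of the corollary is already contained in the denseness established in Theorem~\ref{thm:prime-frames-dense}.
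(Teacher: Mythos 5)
Your proposal is correct and matches the paper's own (implicit) proof: the corollary is stated as a direct consequence of the denseness of $\mathcal{P}(N,M,A)$ in $\mathcal{F}(N,M,A)$ established in Theorem~\ref{thm:prime-frames-dense}, which is exactly what you invoke. Your extra remarks on fixing the frame bound $A$ and the cardinality $M$ (or alternatively rescaling to the Parseval case) are harmless bookkeeping that the paper leaves to the reader.
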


\section{Classification of certain prime tight frames}
\label{sec:special-classes}  
In this section we characterize prime frames within three well-known families of FUNTFs: equiangular tight
frames, harmonic frames, and spectral tetris frames.

\subsection{Equiangular FUNTFs}
\label{sec:equiangular-funtfs}
A FUNTF $\Phi=\set{\varphi_i}_{i=1}^M$ is said to be \emph{equiangular} if
$\abssmall{\innerprod{\varphi_i}{\varphi_j}}=c$ for all $i,j=1,\dots,M$ with $i \neq j$ for some constant $c
\ge 0$, \cite{Strohmer:2003aa}. Equivalently, equiangular tight frames are sequences that achieve the
Welch bounds with equality \cite{Welch:1974aa}. We show that, when they exist, equiangular tight frames are
automatically prime tight frames. This necessary condition was derived by Xia, Zhou, and Giannikis
\cite{xzg05} for the special case of harmonic tight frames using the notion of difference sets.

\begin{theorem}[{$\!\!$\cite[Theorem 2.3]{Strohmer:2003aa}}]  
\label{thm:etf}
Suppose $\Phi=\{\varphi_i\}_{i=1}^{M}$ be a unit norm frame in $\K^N$. Then
\[
\max_{i\neq j}\abs{\innerprod{\varphi_i}{\varphi_j}}\geq \sqrt{\frac{M-N}{N(M-1)}},
\] 
and equality holds if and only if $\Phi$ is an equiangular tight frame. 
\end{theorem}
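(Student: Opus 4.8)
The plan is to reproduce the classical Gram-matrix proof of the Welch bound, passing from the frame vectors to the frame operator $S=\Phi\Phi^\ast$ and the Gram matrix $G=\Phi^\ast\Phi$, and extracting the inequality from two trace computations together with a Cauchy--Schwarz estimate on the eigenvalues of $S$. First I would record, using that the vectors are unit norm, that $\operatorname{tr}(S)=\operatorname{tr}(G)=\sum_{i=1}^M\norm{\varphi_i}^2=M$. Next, using the cyclic invariance of the trace and the fact that $G$ is Hermitian, I would compute
\[
\operatorname{tr}(S^2)=\operatorname{tr}(G^2)=\sum_{i,j=1}^M\abs{\innerprod{\varphi_i}{\varphi_j}}^2=M+\sum_{i\neq j}\abs{\innerprod{\varphi_i}{\varphi_j}}^2,
\]
where the diagonal contributes $\sum_i\norm{\varphi_i}^4=M$.

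The core inequality then comes from the fact that $S$ is a positive definite $N\times N$ matrix (since $\Phi$ is a frame and hence spans $\K^N$). Writing $\lambda_1,\dots,\lambda_N>0$ for its eigenvalues, Cauchy--Schwarz gives $(\sum_k\lambda_k)^2\le N\sum_k\lambda_k^2$, that is, $\operatorname{tr}(S^2)\ge(\operatorname{tr}S)^2/N=M^2/N$. Combining this with the trace identity above yields
\[
\sum_{i\neq j}\abs{\innerprod{\varphi_i}{\varphi_j}}^2\ge\frac{M^2}{N}-M=\frac{M(M-N)}{N}.
\]
Since this is a sum of $M(M-1)$ nonnegative terms, its maximum is at least its average, so $\max_{i\neq j}\abs{\innerprod{\varphi_i}{\varphi_j}}^2\ge\frac{M(M-N)}{N\cdot M(M-1)}=\frac{M-N}{N(M-1)}$, and taking square roots gives the asserted bound.

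For the equality characterization — which I expect to be the delicate part — I would note that the bound was derived as a chain of two inequalities, $\max_{i\neq j}\abs{\innerprod{\varphi_i}{\varphi_j}}^2\ge\tfrac{1}{M(M-1)}\sum_{i\neq j}\abs{\innerprod{\varphi_i}{\varphi_j}}^2\ge\tfrac{M-N}{N(M-1)}$, so that overall equality forces equality in both steps. Equality in the second (Cauchy--Schwarz) step forces all eigenvalues $\lambda_k$ to coincide; being positive with sum $M$, they must equal $M/N$, i.e.\ $S=\tfrac{M}{N}\id[N]$, so $\Phi$ is a tight frame. Equality in the first (maximum versus average) step forces every off-diagonal $\abs{\innerprod{\varphi_i}{\varphi_j}}$, $i\neq j$, to equal the common maximum, i.e.\ $\Phi$ is equiangular. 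Conversely, an equiangular tight frame saturates both steps and attains the bound. The main obstacle is precisely this equality bookkeeping: one must match each saturated inequality to the correct structural property (tightness, respectively equiangularity) and verify the converse, taking care that saturating the overall bound is genuinely equivalent to saturating both intermediate inequalities simultaneously.
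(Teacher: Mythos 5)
Your proof is correct: the trace identities, the Cauchy--Schwarz step on the eigenvalues of $S=\Phi\Phi^\ast$, the max-versus-average step, and the equality bookkeeping (equality overall forcing both intermediate inequalities to be tight, giving tightness and equiangularity respectively, plus the easy converse) are all sound. Note that the paper does not prove this statement at all---it quotes it as Theorem 2.3 of Strohmer and Heath---and your argument is essentially the standard Gram-matrix/trace derivation of the Welch bound given in that cited source, so there is nothing to reconcile.
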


The existence of equiangular tight frames is still an open problem. However, as shown below, when they exist,
equiangular tight frames are also prime.

\begin{theorem}
\label{thm:irreetf}
Let $N\ge 2$. Equiangular FUNTFs of $M$ vectors in $\K^N$, when they exist, are prime.
\end{theorem}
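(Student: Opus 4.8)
The plan is to argue by contradiction, using the splitting result and the characterization of the equiangular constant. Suppose the equiangular FUNTF $\Phi=\{\varphi_i\}_{i=1}^{M}$ were divisible, so that some proper subset $\Phi_1\subsetneq\Phi$, say with $p$ vectors, is a tight frame for $\K^N$. By Theorem~\ref{thm:splits-into-two-tight-frames} the complement $\Phi_2=\Phi\setminus\Phi_1$ is then also a tight frame, so it must contain at least $N$ vectors; hence $N\le p\le M-N<M$. The point is that $\Phi_1$ is itself a unit-norm tight frame, and being a subcollection of the equiangular frame $\Phi$ it inherits $\abs{\innerprod{\varphi_i}{\varphi_j}}=c$ for all distinct indices $i,j$ occurring in $\Phi_1$. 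Thus $\Phi_1$ is again an equiangular FUNTF, now of $p$ vectors in $\K^N$.

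The crux is that the common angle of an equiangular tight frame is rigidly pinned down by its cardinality and the dimension. From the equality case of Theorem~\ref{thm:etf}, an equiangular FUNTF of $q$ vectors in $\K^N$ has angle $c=\sqrt{(q-N)/(N(q-1))}$. Applying this once to $\Phi$ (with $M$ vectors) and once to $\Phi_1$ (with $p$ vectors) yields
\[
c^2 = \frac{M-N}{N(M-1)} \qquad\text{and}\qquad c^2 = \frac{p-N}{N(p-1)} .
\]
To close the argument I would show that $x\mapsto (x-N)/(N(x-1))$ is strictly increasing for $x>1$ when $N\ge 2$: it equals $\tfrac{1}{N}\bigl(1-(N-1)/(x-1)\bigr)$, which increases strictly in $x$ because $N-1>0$. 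Consequently the two displayed expressions for $c^2$ force $p=M$, contradicting $p<M$. Therefore no proper tight subframe exists and $\Phi$ is prime.

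If one prefers to avoid invoking Theorem~\ref{thm:etf}, the relation $c^2=(p-N)/(N(p-1))$ can be derived directly: any unit-norm tight frame of $p$ vectors has frame operator $(p/N)I_N$, so comparing the trace of its square (namely $p^2/N$) with the Gram-matrix identity $\sum_{i,j}\abs{\innerprod{\varphi_i}{\varphi_j}}^2=p+p(p-1)c^2$ gives the formula after rearranging. I expect the monotonicity step to be the only genuine content of the proof; the rest is bookkeeping. The hypothesis $N\ge 2$ enters precisely here, to make the angle-versus-cardinality relation strictly monotone, which is exactly what rules out a smaller equiangular tight frame sharing the same angle $c$.
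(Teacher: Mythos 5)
Your proof is correct and takes essentially the same route as the paper: assume $(M,p)$-divisibility, observe that the proper tight subframe $\Phi_1$ inherits unit norms and the equiangularity constant $c$, and apply the equality case of Theorem~\ref{thm:etf} to both $\Phi$ and $\Phi_1$ to force $p=M$, a contradiction. Your monotonicity argument for $x\mapsto (x-N)/(N(x-1))$ just makes explicit the ``series of easy calculations'' that the paper leaves to the reader.
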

\begin{proof}
  Assume that $\Phi=\{\varphi_i\}_{i=1}^{M}\subset \K^N$ is an equiangular tight frame. Assume towards a
  contradiction that $\Phi$ is $(M, p)$-divisible for some $N\leq p \leq M-N$. Then write $\Phi=\Phi_1 \cup
  \Phi_2$ where $\Phi_1$ and $\Phi_2$ are divisors of $\Phi$ of size $p$ and $M-p$,
  respectively. Consequently, we see that $\Phi$, $\Phi_1,$ and $\Phi_2$ are all equiangular tight
  frames. According to Theorem \ref{thm:etf},
  \[
  \abs{\innerprod{\varphi_i}{\varphi_j}}=\sqrt{\tfrac{M-N}{N(M-1)}}=\sqrt{\tfrac{p-N}{N(p-1)}}=\sqrt{\tfrac{M-p-N}{N(M-p-1)}}
  \]
  with $N\leq p \leq M-N$. But a series of easy calculations leads to a contradiction. Thus, $\Phi$ cannot be
  $(M, p)$-divisible for any $N\leq p \leq M-N$.
\end{proof}

Note that Grassmanian tight frames, see \cite{beko}, are not in general prime tight frames, \eg any
Grassmanian frame of four frame vectors in $\R^2$ is $(4,2)$-divisible.

\subsection{Harmonic frames}
\label{sec:harmonic-frames}
We now characterize all harmonic FUNTFs that are prime, and for those that are divisible we describe their
factors. For this, we recall that given $M\geq N$, a \emph{harmonic tight frame} (HTF) is obtained by keeping
and renormalizing the $N$ first coordinates from an $M\times M$ discrete Fourier transform, that is,
$\Phi=\{\varphi_{k}\}_{k=0}^{M-1}$ is a HTF, where the $(k+1)$th column of $\Phi$ is given by
\[ 
\varphi_{k}
=\sqrt{\frac{s}{N}}\begin{pmatrix} 1 \\ \gamma_{M}^{k}\\  \gamma_{M}^{2k}\\ \vdots \\
  \gamma_{M}^{(N-1)k}\end{pmatrix}
=\sqrt{\frac{s}{N}}\begin{pmatrix}\omega_{1}^{k  }\\ \omega_{2}^{k}\\
  \omega_{3}^{k}\\ \vdots \\
  \omega_{N}^{k}\end{pmatrix} \in \C^N,
\] 
where $s>0$, $\gamma_M:=\exp{(2 \pi i /M)}$ is the $M$th root of unity and $\omega_n:=\gamma_M^{n-1}=\myexp{2
  \pi i(n-1) /M}$ for $n=1,\dots,N$. When there is no risk of confusion we shall simply write $\gamma$ for
$\gamma_M= \exp{(2\pi i /M)}$. We denote the obtained HTF by $\HTF(N,M,s)$, and we see that this tight frame
has frame bound $A=sM/N$ and frame vector norms $\norm{\varphi_k}=s$. Hence, for $s=1$ we have a unit-norm,
$M/N$-tight frame, while we for $s=N/M$ have a Parseval frame. When nothing else is mentioned we assume $s=1$
for simplicity.

Let us fix some notations and assumptions for this section.  We will always assume $N\ge 2$ since, if $N=1$,
any HTF is $(M,p)$-divisible for all $p$.  Throughout this section we will denote the index set
$\set{1,2,\dots,M}$ by $I$, and we will let $J_1$ denote a subset of $I$ and put $J_2:=I \setminus J_1$.  If
$d$ is a divisor of $M$, we define index sets $I(d,q) \subset I=\{1, 2, \hdots M\}$ as
follows 
\begin{equation}
  \label{eq:divisor-index-sets}
  I(d,q)=\setprop{k\frac{M}{d}+q}{k=0,1,\dots, d-1}, \qquad q=1,\hdots,M/d.
\end{equation}
These index sets are a disjoint partition of $I=\set{1,2,\dots,M}$ for a fixed divisor $d$. For any $n=1,
2,\hdots,N-1$, the set $\setprop{(\gamma^{n})^{m}}{m\in I(d,1)}$ is a subgroup of
$\setprop{(\gamma^{n})^{m}}{m\in I}$ in the circle group, and $\setprop{(\gamma^{n})^{m}}{m\in I(d,q)}$ is a
coset.  Furthermore, we assume the following prime factorization $M=p_1^{\alpha_1}p_{2}^{\alpha_2}\cdots
p_{r}^{\alpha_{r}}$ with $\alpha_i \in \N$ and $p_i$ prime and ordered such that $p_i>p_{i+1}$.
  
Let $\Phi = \{\varphi_{k}\}_{k=1}^{M} \subset \C^N$ be a HTF with index set $I =\set{1,2,\dots,M}$. Since
$\Phi=\{\varphi_{k}\}_{k \in I}$ is a tight frame, the rows of $\Phi$ are equal-norm and orthogonal.  In
particular, we have
\begin{equation}
  0= \innerprodbig{\varphi^n}{\varphi^{n'}} = \sum_{m=1}^M (\gamma^{(n-1)-(n'-1)})^{m-1} =\sum_{m\in I} (\gamma^{n-n'})^{m-1}  
  \quad \text{for $n\neq n' \in \{1, 2, \hdots, N\},$}\label{eq:htf-row-ortho}
\end{equation}
where $\varphi^j$ denotes the $j$th row of $\Phi$.  Now, $\Phi$ is divisible exactly when it is possible to
split the sum over $m \in I$ into two sums, each summing to zero, for each $n\neq n'$. Here we have used that
the norms of the $N$ rows of any sub-collection of $\Phi$ are automatically equal since the entries of $\Phi$
are equal in modulus. Therefore it is only necessary to consider the row-orthogonality requirement of
potential divisors. We will use the following result repeatedly.
\begin{lemma}
  \label{lem:divisible-iff-vanishing-sums}
  Let $N,M \ge 2$ be given, and let $\Phi = \{\varphi_{k}\}_{k=1}^{M} \subset \C^N$ be a HTF. Then $\Phi$ is
  $(M,p)$-divisible if and only if there exists $J_1 \subset I=\set{1,\dots,M}$ with $\card{J_1}=p$ such that
  \begin{equation}
    0 =\sum_{m\in J_1} (\gamma^{n-1})^{m-1}  
    \qquad \text{for $n \in \{2, \hdots, N\}$}\label{eq:htf-row-ortho-to-first}.
  \end{equation}
\end{lemma}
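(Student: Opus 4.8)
The plan is to translate the tight-frame property of a sub-collection $\Phi_{J_1} = [\varphi_m]_{m \in J_1}$ into an explicit identity for its frame operator $\Phi_{J_1}\Phi_{J_1}^\ast$, and then to use the special structure of the HTF entries to collapse the full list of orthogonality constraints down to the ones recorded in \eqref{eq:htf-row-ortho-to-first}. Since a collection of vectors is a tight frame exactly when its frame operator is a positive multiple of $\id[N]$, I first write out the $(n,n')$ entry of the $N \times N$ matrix $\Phi_{J_1}\Phi_{J_1}^\ast$. Using that the $(n,m)$ entry of $\Phi$ equals $\sqrt{s/N}\,(\gamma^{n-1})^{m-1}$ and that $\overline{\gamma^{\,n'-1}} = \gamma^{-(n'-1)}$, a short computation gives
\[
\bigl(\Phi_{J_1}\Phi_{J_1}^\ast\bigr)_{n,n'} = \tfrac{s}{N}\sum_{m \in J_1}(\gamma^{n-n'})^{m-1}.
\]
The diagonal entries ($n = n'$) all equal $\tfrac{s}{N}\,\card{J_1} = \tfrac{sp}{N} > 0$, reflecting the remark preceding the lemma that the rows of any sub-collection are automatically of equal norm; hence the diagonal imposes no constraint. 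Therefore $\Phi_{J_1}$ is a tight frame if and only if every off-diagonal entry vanishes, i.e. $\sum_{m \in J_1}(\gamma^{n-n'})^{m-1} = 0$ for all $n \neq n'$ in $\{1,\dots,N\}$.

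The key step is to reduce these constraints to the single family with $n' = 1$. Because the sum above depends on $(n,n')$ only through the difference $\delta := n - n'$, the distinct off-diagonal constraints are indexed by $\delta \in \{\pm 1, \dots, \pm(N-1)\}$. Since $\gamma$ lies on the unit circle and the exponents $m-1$ are integers, the $\delta$-sum and the $(-\delta)$-sum are complex conjugates of one another, so one vanishes precisely when the other does. It therefore suffices to impose the constraint for $\delta \in \{1,\dots,N-1\}$; taking $n' = 1$ and $n = \delta + 1$ turns this into exactly \eqref{eq:htf-row-ortho-to-first} for $n \in \{2,\dots,N\}$. This collapse---from all ordered pairs $(n,n')$ down to the $N-1$ constraints with $n'=1$---is the crux of the argument, and it rests entirely on the harmonic structure (dependence on $n-n'$ alone, together with conjugate symmetry).

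Finally I assemble the equivalence. By Definition~\ref{def:ptf}, $\Phi$ is $(M,p)$-divisible precisely when some subset $\Phi_{J_1}$ with $\card{J_1} = p$ is itself a tight frame, and the two steps above show this happens if and only if \eqref{eq:htf-row-ortho-to-first} holds for that $J_1$; combining the two statements yields the lemma. I would also remark, to connect with the ``two vanishing sums'' picture described before the lemma, that once the $J_1$-sum vanishes the complementary sum over $J_2 := I \setminus J_1$ vanishes as well---this follows by subtracting \eqref{eq:htf-row-ortho-to-first} from the full row-orthogonality relation \eqref{eq:htf-row-ortho}---so by Theorem~\ref{thm:splits-into-two-tight-frames} the complement $\Phi \setminus \Phi_{J_1}$ is automatically a tight frame of $M-p$ vectors.
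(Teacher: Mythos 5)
Your proof is correct and follows essentially the same route as the paper's: both translate tightness of $\Phi_{J_1}$ into vanishing of the off-diagonal entries of $\Phi_{J_1}\Phi_{J_1}^{\ast}$ (equal row norms being automatic), and both collapse all pairs $(n,n')$ to the $n'=1$ family using that the sums depend only on $n-n'$ together with conjugate symmetry of $\gamma^{\pm\delta}$. The paper phrases this via row inner products $\innerprods{\varphi_1^n}{\varphi_1^{n'}}$ and argues from the $n'=1$ constraints outward, while you argue from the full set of constraints inward, but the content is identical.
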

\begin{proof}
  Let $\Phi_1 := \Phi_{J_1}=\set{\varphi_k}_{k \in J_1}$, and let $\varphi_1^j$ be the $j$th row of
  $\Phi_1$. Note that the equations in~\eqref{eq:htf-row-ortho-to-first} are equivalent to the statement that
  $\innerprods{\varphi_1^1}{\varphi_1^n}=0$ for $n=2,\dots,N$. Assume \eqref{eq:htf-row-ortho-to-first}
  holds. Then, since $\gamma^{-k}=\gamma^{M-k}$, we see that
\begin{equation*}
   0 =\sum_{m\in J_1} (\gamma^{n-n'})^{m-1}  
 \end{equation*}
 holds for all $n, n' \in \{2, \hdots, N\}$ with $n \neq n'$.  The last statement is equivalent to
 $\innerprods{\varphi_1^n}{\varphi_1^{n'}}=0$ for all $n \neq n' \in \set{2, \hdots, N}$ which in matrix
 notation becomes $\Phi_1 \Phi_1^\ast = c \id[n]$.  Therefore, equation~\eqref{eq:htf-row-ortho-to-first}
 implies that $\Phi_1$ is a tight frame. The converse implication follows easily from the above.
\end{proof}

Suppose $\Phi$ is $(M,p)$-divisible. By taking $n=2$ in \eqref{eq:htf-row-ortho-to-first} we see that the
sub-sum over $J_1$ must be the sum of $p$ $M$th roots of unity, and, of course, the second sum over $J_2$ must
be the sum of $(M-p)$ $M$th roots of unity. This is an example of vanishing sums of roots of unity
\cite{lamle00}. When one of the vanishing sub-sums contains $p$ distinct $M$th roots of unity, one says that
$M$ is \emph{$p$-balancing} \cite{sivek10}. In particular, we shall use the following result proved in
\cite{sivek10}.
  
\begin{theorem}[{$\!$\cite[Theorem 2]{sivek10}}]
\label{thm:sivek}
Let $M=p_1^{\alpha_1}p_{2}^{\alpha_2}\cdots p_{r}^{\alpha_{r}}$ with each $p_i$ prime and each
$\alpha_i>0$. Then $M$ is $k$-balancing if and only if both $k$ and $M-k$ are in $\N_0 p_1 + \N_0 p_2 + \hdots
+ \N_0 p_r$, that is, both $k$ and $M-k$ are linear combination of $p_i$ with nonnegative integer
coefficients.
\end{theorem}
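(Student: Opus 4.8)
The plan is to restate $k$-balancing as a partition problem and then prove the two implications by quite different means. Since $\sum_{j=0}^{M-1}\gamma^j = 0$ for $M\ge 2$, a $k$-element subset $S$ of the group $\mu_M$ of $M$th roots of unity sums to zero \ifft its complement $\mu_M\setminus S$ (which has $M-k$ elements) also sums to zero. Hence $M$ is $k$-balancing precisely when $\mu_M$ can be split into two vanishing-sum subsets of sizes $k$ and $M-k$. I would phrase everything in these terms and freely identify $\mu_M$ with $\Z/M\Z$ via $\gamma^a\leftrightarrow a$.

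For the necessity ($\Rightarrow$) I would invoke the structure theorem of Lam and Leung on vanishing sums of roots of unity \cite{lamle00}: the number of summands of any vanishing sum of $M$th roots of unity lies in $\N_0 p_1+\dots+\N_0 p_r$. Applying this to the vanishing sum over a balancing set $S$ gives $k=\card{S}\in\N_0 p_1+\dots+\N_0 p_r$, and applying it to the vanishing sum over $\mu_M\setminus S$ gives $M-k\in\N_0 p_1+\dots+\N_0 p_r$. Distinctness of the roots plays no role here, so this direction is immediate once the Lam--Leung result is granted.

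For the sufficiency ($\Leftarrow$) I would argue constructively, using as building blocks the \emph{prime polygons}: for a prime $p_i\mid M$ the sets indexed by $I(p_i,q)$ in \eqref{eq:divisor-index-sets} are cosets of the order-$p_i$ subgroup of $\mu_M$, each consisting of $p_i$ distinct $M$th roots of unity that sum to zero. Writing the hypotheses as $k=\sum_i c_i p_i$ and $M-k=\sum_i d_i p_i$ with $c_i,d_i\ge 0$, the goal is to assemble a balancing set $S$ as a disjoint union of $c_i$ cosets of the order-$p_i$ subgroup; its complement is then automatically a vanishing sum of size $M-k$. One must note that mere membership $k\in\N_0 p_1+\dots+\N_0 p_r$ is \emph{not} enough (for instance in $\mu_6$ one has $5=3+2$, yet no $5$-subset sums to zero): cosets of different prime subgroups overlap, so a greedy packing can stall, and it is exactly the extra hypothesis $M-k\in\N_0 p_1+\dots+\N_0 p_r$ that must be exploited to guarantee that a packing exists.

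I expect this packing step to be the main obstacle. The route I would take is induction on the number $\alpha_1+\dots+\alpha_r$ of prime factors of $M$ counted with multiplicity: peel off the largest prime $p_1$, use that $\mu_M$ splits into $p_1$ scaled copies of the order-$(M/p_1)$ subgroup (equivalently, that translation by the complementary subgroup organizes the prime polygons into compatible blocks), and descend to the modulus $M/p_1$, whose associated semigroup controls the induced counts. The delicate part is the bookkeeping that keeps both target sizes $k$ and $M-k$ simultaneously realizable under the reduction while forcing the chosen cosets to be mutually disjoint; this is precisely the content of Sivek's argument \cite{sivek10}, which I would then follow in full detail.
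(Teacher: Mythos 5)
First, a point of calibration: the paper never proves this statement at all --- it is imported verbatim from Sivek, as the bracketed citation in the theorem header indicates, and the surrounding text only ever uses it as a black box. So there is no internal proof to compare yours against; your proposal has to be judged as a self-contained argument. On that standard, your necessity direction is fine: the reformulation of $k$-balancing as splitting $\mu_M$ into two vanishing subsets of sizes $k$ and $M-k$ is exactly right, and applying the Lam--Leung theorem \cite{lamle00} to the chosen set and to its complement yields both memberships at once (distinctness indeed playing no role). This is also precisely how the necessity half is obtained in \cite{sivek10}. Your $\mu_6$ example is correct and well chosen: $5=3+2$ lies in $\N_0 2+\N_0 3$, yet the complement of a vanishing $5$-set would be a single nonzero root, so $6$ is not $5$-balancing.

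The genuine gap is the sufficiency direction, which is where all the content of the theorem lives. You correctly identify the obstruction --- for $M=pq$ every coset of the order-$p$ subgroup meets every coset of the order-$q$ subgroup, so mixed packings can be outright impossible, and it is the arithmetic hypothesis on both $k$ and $M-k$ that must rescue the construction --- but you then stop and declare that the delicate bookkeeping ``is precisely the content of Sivek's argument, which I would then follow in full detail.'' That is a statement of intent, not a proof: no induction hypothesis is formulated, no reduction step from $M$ to $M/p_1$ is verified, and you do not even establish that a balancing set can always be realized as a disjoint union of prime-order cosets (which your plan presupposes). As a blind reconstruction your write-up is honest and locates the right difficulty, but the hard half of the equivalence remains unproved; to close it you must either carry out the packing induction in full or simply cite \cite{sivek10} for the entire statement, which is exactly what the paper does.
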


By Lemma~\ref{lem:divisible-iff-vanishing-sums}, we immediately have that, for $N=2$, a HTF is
$(M,p)$-divisible if and only if $M$ is $p$-balancing. More precisely, we
have: 

\begin{corollary}
  \label{thm:harmonic-divisors-N2}
  Let $M\geq 2$ be given. Suppose $\Phi = \{\varphi_{k}\}_{k=1}^{M} \subset \C^2$ is a HTF. Then $\Phi$ is
  prime if and only if $M$ is a prime integer. Furthermore, if $M$ is not prime, then $\Phi$ is
  $(M,d)$-divisible for each $2\leq d \leq M-2$ for which both $d$ and $M-d$ are in $\N_0 p_1 + \N_0 p_2 +
  \hdots + \N_0 p_r$, in particular, $\Phi$ is $(M,d)$-divisible for every divisor $d$ of $M$.
\end{corollary}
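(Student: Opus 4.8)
The plan is to reduce the entire statement to the $p$-balancing characterization recorded just above the corollary, namely that (for $N=2$) $\Phi$ is $(M,p)$-divisible if and only if $M$ is $p$-balancing, and then to feed this into Sivek's Theorem~\ref{thm:sivek}. Throughout, the admissible erasure range is $2\le p\le M-2$, since $N=2$. There are three things to establish: the primality equivalence, the stated $(M,d)$-divisibility range, and the ``in particular'' clause about divisors.

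For the primality equivalence I would instead prove the logically equivalent statement that $\Phi$ is divisible if and only if $M$ is composite. First, $\Phi$ is divisible exactly when $M$ is $p$-balancing for some $2\le p\le M-2$. If $\Phi$ is divisible, Theorem~\ref{thm:sivek} forces some such $p$ to satisfy $p,M-p\in\N_0 p_1+\dots+\N_0 p_r$; were $M$ prime we would have $r=1$, $p_1=M$, and this set would be $\N_0 M=\set{0,M,2M,\dots}$, which contains no integer in the range $2\le p\le M-2$ --- a contradiction, so $M$ must be composite. Conversely, if $M$ is composite it has a divisor $d$ with $1<d<M$, and the explicit construction below produces a divisor of $\Phi$ of size $d$, so $\Phi$ is divisible. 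Note that $1<d<M$ forces $d\le M/2\le M-2$ for $M\ge 4$ (while $M=2,3$ are prime), so the resulting split into parts of sizes $d$ and $M-d$ is admissible.

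The explicit divisor uses the coset index sets $I(d,q)$ from~\eqref{eq:divisor-index-sets}. Taking $J_1=I(d,1)=\setprop{k\tfrac{M}{d}+1}{k=0,\dots,d-1}$, which has cardinality $d$, I compute $\sum_{m\in J_1}\gamma^{m-1}=\sum_{k=0}^{d-1}\gamma^{kM/d}=\sum_{k=0}^{d-1}(\gamma^{M/d})^k$. Since $d\mid M$ and $d\ge 2$, the element $\gamma^{M/d}=\exp(2\pi i/d)$ is a primitive $d$th root of unity, so this full cycle of $d$th roots of unity sums to zero. By Lemma~\ref{lem:divisible-iff-vanishing-sums} (with $N=2$ the condition~\eqref{eq:htf-row-ortho-to-first} is exactly $\sum_{m\in J_1}\gamma^{m-1}=0$), $\Phi$ is $(M,d)$-divisible. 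This proves the ``in particular'' clause and supplies the divisor invoked in the composite case above. The stated range claim is then immediate: for $2\le d\le M-2$, $\Phi$ is $(M,d)$-divisible iff $M$ is $d$-balancing iff, by Theorem~\ref{thm:sivek}, both $d$ and $M-d$ lie in $\N_0 p_1+\dots+\N_0 p_r$.

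The proof is essentially bookkeeping once this machinery is in place, so I do not expect a deep obstacle; the only points requiring care are (i) checking that a proper divisor $d$ of a composite $M$ yields an \emph{admissible} partition, i.e.\ that both parts have at least $N=2$ vectors, which I handle via $1<d<M\Rightarrow d\le M/2$, and (ii) the numerical observation that for prime $M$ the set $\N_0 M$ meets $\set{2,\dots,M-2}$ trivially. Everything else follows directly from Lemma~\ref{lem:divisible-iff-vanishing-sums}, Theorem~\ref{thm:sivek}, and the elementary vanishing of a complete set of $d$th roots of unity.
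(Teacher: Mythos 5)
Your proposal is correct. It shares the paper's skeleton---reduce everything to the fact, recorded just before the corollary, that for $N=2$ the frame is $(M,p)$-divisible exactly when $M$ is $p$-balancing, and then apply Theorem~\ref{thm:sivek}---and your argument for the direction ``$M$ prime $\Rightarrow \Phi$ prime'' (your contrapositive ``divisible $\Rightarrow$ composite'') is the same as the paper's: for prime $M$ the set $\N_0 M$ meets $\set{2,\dots,M-2}$ in nothing. Where you genuinely diverge is in the direction ``$M$ composite $\Rightarrow \Phi$ divisible'' and the ``in particular'' clause. The paper stays entirely inside Sivek's theorem: it takes $d=p_1$, notes $M-d=p_1(b-1)$, so both $d$ and $M-d$ lie in $\N_0 p_1+\dots+\N_0 p_r$, whence $M$ is $p_1$-balancing and $\Phi$ is divisible. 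You instead construct the divisor explicitly as the coset $J_1=I(d,1)$ and verify $\sum_{k=0}^{d-1}\bigl(\gamma^{M/d}\bigr)^k=0$ directly, feeding this into Lemma~\ref{lem:divisible-iff-vanishing-sums}. Your route is more elementary for that half (no appeal to Sivek), yields the divisor concretely rather than by existence, and in fact mirrors what the paper itself does for general $N$ in the first half of the proof of Theorem~\ref{harmonic-prime1}; what the paper's choice buys is brevity and uniformity, since the single citation of Sivek handles the divisor case and the full ``furthermore'' range at once---a range claim for which you, too, still invoke Sivek. Your admissibility check ($1<d<M$ with $d\mid M$ forces $d\le M/2\le M-2$ once $M\ge 4$, and composite $M$ is at least $4$) is a detail the paper glosses over, and it is correct.
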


\begin{proof}
  We will use the fact that the frame $\HTF(2,M,s)$ is prime if and only if $M$ is not $d$-balancing for any
  $2 \le d \le M-2$.

  Assume first that $\Phi$ is prime, that is, that $M$ is not $d$-balancing for any $2 \le d \le M-2$. Towards
  a contradiction assume further that $M$ is not prime so that $M=p_1^{\alpha_1}p_{2}^{\alpha_2}\cdots
  p_{r}^{\alpha_{r}}$ with $r>1$. Then $M=p_1b$ with $b = p_1^{\alpha_1-1}p_{2}^{\alpha_2}\cdots
  p_{r}^{\alpha_{r}}\ge 2$. For $d=p_1$ we have
  \[
  M-d = p_1 b - p_1 = p_1(b-1),
  \] 
  but this contradicts the fact that $M$ is not $d$-balancing for any $2 \le d \le M-2$.

  For the opposite implication, we observe that the prime factorization of $M=p_1$ is trivial when $M$ itself
  is prime. Since there is no divisor $0<d<M$ so that $d,M-d \in M\N_0$, we see that $M$ is not $d$-balancing
  for any $N\leq d \leq M-N$. Thus, we have proved the bi-implication part of the theorem.

  The ``furthermore'' statement follows immediately from the above and Theorem~\ref{thm:sivek}.
\end{proof}

In general, when $N>2$, the characterization of prime HTFs is more involved. Indeed, we now have multiple rows
consisting of $M${th} roots of unity, and we must insure, for $(M,d)$-divisibility, not only that each of
these row is $d$-balancing, but also that the subset of $d$ roots that sum to zero in each of these rows,
comes from the same columns.

In order to formally state this we need to define a few sets of integers. Using the notation fixed above, we
define

\begin{align}
  D_{M, N}&=\setpropbig{d\in \set{N,\dots, M-N}}{d \text{ is a divisor of } M},\label{eq:def-D_MN} \\
  P_{M,N}&=D_{M,N} \setminus \setprop{d \in D_{M,N}}{\exists c \in
    D_{M,N} \text{ such that $c$ is a divisor of $d$}}, \label{eq:def-P_MN}
\end{align}
and
\begin{align*} 
  S_{M,N} =\setpropbig{s\in \set{N,\dots, M-N}}{s=\sum_{k=1}^{K}a_k q_k, M-s=\sum_{k=1}^{K}b_k q_k, \text{
      where } a_k, b_k \in \N_0, q_k \in P_{M,N} },  
\end{align*}
where $K=\card{P_{M,N}}$.  Note that $s\in S_{M,N}$ if and only if $M-s \in S_{M, N}$.  It is also clear
that $$P_{M,N} \subset D_{M,N} \subset S_{M,N},$$ and that $D_{M,N}$ is empty for any $N\in \N$ if $M$ is
prime or if $M < 2N$. Note that if $M\geq 2N$, the condition $d\leq M-N$ in the definition of $D_{M, N}$ is
redundant as no divisor of $M$ can be greater than $M-N$. Moreover, the set $S_{M,N}$ is empty, if and only if
$D_{M,N}$ is empty. The significance of these sets will become evident in Theorems~\ref{harmonic-prime1} and
\ref{thm:harmonic-prime-and-divisors2} and Corollary~\ref{thm:harmonic-factors} below, but we mention here
that $P_{M,N}$ will determine the cardinality of the prime factors and $S_{M,N}$ the cardinality of every
possible divisor of the HTF.  In the following example we calculate these sets for various $M,N \in \N$.
\begin{example}
\label{exa:D-P-S-sets}
  \begin{compactenum}[(a)]
  \item For prime $M\in \Z$ and any $N \in \N$, we have $P_{M,N}=D_{M,N}=S_{M,N}=\emptyset$.
  \item For $M=9$, $N=2$ or $N=3$: $D_{M,N}=P_{M,N}=\set{3}$, $S_{M,N}=\set{3,6}$.
  \item For $M=9$, $N\ge 4$: $D_{M,N}=P_{M,N}=S_{M,N}=\emptyset$.
  \item For $M=10$, $N=2$: $D_{M,N}=P_{M,N}=\set{2,5}$, $S_{M,N}=\set{2,4,5,6,8}$.
  \item For $M=10$, $N=3, 4$ or $5$: $D_{M,N}=P_{M,N}=S_{M,N}=\set{5}$.
  \item For $M=24$, $N=2$: $D_{M,N}=\set{2,3,4,6,8,12}$, $P_{M,N}=\set{2,3}$, $S_{M,N}=\set{2,3,\dots,22}$.
  \item For $M=24$, $N=3$: $D_{M,N}=\set{3,4,6,8,12}$, $P_{M,N}=\set{3,4}$, $S_{M,N}=\set{3,\dots,21}\setminus \set{5,19}$. 
  \item For $M=24$, $N=4$: $D_{M,N}=\set{4,6,8,12}$, $P_{M,N}=\set{4,6}$, $S_{M,N}=\set{4,6,\dots,20}$.
  \end{compactenum}
\end{example}

Given $d \in \N$, the following well-known fact will be used repeatedly:  
\begin{equation}
  \label{eq:ross-hewitt}
\frac{1}{d}  \sum_{k =0}^{d-1} \mathrm{e}^{2\pi i \tfrac{km}{d}} =
  \begin{cases}
    1 & m \in d\Z ,\\
    0 & m \in \Z \setminus d\Z.
  \end{cases}
\end{equation}

We now state and prove the following result characterizing all HTFs that are prime. 
  
\begin{theorem}\label{harmonic-prime1}
  Let $N,M \in \N$ and $s>0$ be given, and let $\Phi = \{\varphi_{k}\}_{k=1}^{M}=\HTF(N,M,s) \subset
  \C^N$. Then $\Phi$ is prime if and only if $D_{M, N}=\emptyset.$ In particular, if $M$ is prime, then $\Phi$
  is prime.
\end{theorem}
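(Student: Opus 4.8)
The plan is to prove the equivalent statement ``$\Phi$ is divisible $\iff D_{M,N}\neq\emptyset$'', using throughout the reformulation of divisibility from Lemma~\ref{lem:divisible-iff-vanishing-sums}: the frame is $(M,p)$-divisible exactly when there is a subset $J_1\subset I$ with $\card{J_1}=p$ satisfying $0=\sum_{m\in J_1}(\gamma^{n-1})^{m-1}$ for every $n\in\set{2,\dots,N}$, cf.~\eqref{eq:htf-row-ortho-to-first}. Since an admissible divisor has size $p$ with $N\le p\le M-N$, two cases are immediate: if $M<2N$ then $\Phi$ is prime by Remark~\ref{lem:atleast-2-times-dimension}, and if $M$ is prime then $p$ distinct $M$th roots of unity with $0<p<M$ cannot sum to zero, so \eqref{eq:htf-row-ortho-to-first} already fails at $n=2$; in both cases $D_{M,N}=\emptyset$ and $\Phi$ is prime. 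Hence I may assume $M\ge 2N$ and $M$ composite, where $D_{M,N}=\emptyset$ is equivalent to every proper divisor of $M$ being strictly smaller than $N$, i.e.\ $M/p_r<N$ with $p_r$ the smallest prime factor of $M$.

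For the easy direction ($D_{M,N}\neq\emptyset\Rightarrow\Phi$ divisible) I would pick a divisor $d\mid M$ with $N\le d\le M-N$ and take $J_1=I(d,1)=\setprop{kM/d+1}{k=0,\dots,d-1}$ from \eqref{eq:divisor-index-sets}, so that $\card{J_1}=d$ and $m-1=kM/d$ for $m\in J_1$. Then $(\gamma^{n-1})^{m-1}=\myexp{2\pi i(n-1)k/d}$, and summing over $k$ the fact \eqref{eq:ross-hewitt} yields $\sum_{m\in I(d,1)}(\gamma^{n-1})^{m-1}=0$ whenever $d\nmid(n-1)$. For $n\in\set{2,\dots,N}$ we have $0<n-1\le N-1<d$, so the sum always vanishes and Lemma~\ref{lem:divisible-iff-vanishing-sums} makes $\Phi$ an $(M,d)$-divisible, hence non-prime, frame. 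The final assertion ``$M$ prime $\Rightarrow\Phi$ prime'' is exactly the trivial case isolated above.

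The hard direction ($\Phi$ divisible $\Rightarrow D_{M,N}\neq\emptyset$) is where the real work lies. Arguing the contrapositive, I assume $\Phi$ is divisible while every proper divisor of $M$ is $<N$, and I encode a divisor by $A=\setprop{m-1}{m\in J_1}\subset\Z_M$ with $p=\card{A}\in\set{N,\dots,M-N}$. The conditions \eqref{eq:htf-row-ortho-to-first} say precisely that the discrete Fourier transform of $\chi_A$ is supported in $\set{0}\cup\set{N,\dots,M-N}$; in particular $\sum_{a\in A}\gamma^{a}=0$ is a vanishing sum of $M$th roots of unity. The idea is to play this low-frequency vanishing against the large forbidden gap $\set{1,\dots,N-1}$. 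With $d_{\max}=M/p_r<N$ the largest proper divisor, folding $A$ modulo $p_r$ turns the vanishing of $\widehat{\chi_A}$ at those multiples $k\,d_{\max}$ that fall in $\set{1,\dots,N-1}$ (at least $k=1$, since $d_{\max}<N$) into a vanishing sum of $p_r$th roots of unity with nonnegative integer coefficients; as $p_r$ is prime, these coefficients must coincide, so $A$ is equidistributed over the residues modulo $p_r$. I would then combine this with the structure theorem for vanishing sums of roots of unity \cite{lamle00}, the remaining equations in \eqref{eq:htf-row-ortho-to-first}, and Theorem~\ref{thm:sivek} to upgrade this per-prime balance into a genuine coset structure for $A$ with respect to a single subgroup whose order divides $M$, and finally evaluate $\widehat{\chi_A}$ on the dual frequencies of that subgroup to reach the contradiction that $A$ is empty or all of $\Z_M$.

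The main obstacle is this last direction when $N>2$ and $M=p_1^{\alpha_1}\cdots p_r^{\alpha_r}$ has several distinct prime factors. For $N=2$ only the single row $n=2$ is constrained and Corollary~\ref{thm:harmonic-divisors-N2} (via Theorem~\ref{thm:sivek}) already decides primality; for $N>2$ one must reconcile the vanishing requirements coming from the different rows $n=2,\dots,N$ so that the roots cancelling in each row are carried by the \emph{same} columns. When $r>1$ the decomposition of a vanishing sum into coset sums is no longer unique, so the step I expect to be most delicate is precisely the alignment: promoting the several per-prime equidistribution statements into a single coset structure at a divisor of size at least $N$, which is what ultimately produces the element of $D_{M,N}$.
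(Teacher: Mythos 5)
Your reduction to ``divisible $\iff D_{M,N}\neq\emptyset$'', your handling of the cases $M<2N$ and $M$ prime, and your proof of the direction $D_{M,N}\neq\emptyset\Rightarrow\Phi$ divisible (taking $J_1=I(d,1)$ and combining \eqref{eq:ross-hewitt} with Lemma~\ref{lem:divisible-iff-vanishing-sums}) are correct and coincide with the paper's argument; the observation that vanishing of the row sum at frequency $M/p_r<N$ forces $A$ to be equidistributed modulo the smallest prime factor $p_r$ is also sound. The genuine gap is the converse direction, which is the actual content of the theorem: everything after the equidistribution remark is a program, not a proof. You propose to ``combine the structure theorem of \cite{lamle00}, the remaining equations in \eqref{eq:htf-row-ortho-to-first}, and Theorem~\ref{thm:sivek}'' to get a coset structure for $A$, and you yourself flag this alignment step as the one you expect to be most delicate --- but you never carry it out. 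That step is precisely where the paper's proof does all of its work: writing $J_1=\bigcup_{k\in R_1}I_k$ with $I_k$ a union of $a_k$ cosets of the order-$p_k$ subgroup, it fixes a prime $p_{k_0}<N$, uses row $n=p_{k_0}+1$ to isolate the vanishing of the sub-sum over $I_{k_0}$ alone (equation \eqref{eq:d1}), deduces that $\card{Q_{k_0}}$ is a multiple of the smallest divisor $d_1$ of $M/p_{k_0}$, then feeds row $n=d_1p_{k_0}+1$ back in (equation \eqref{eq:d2}) and iterates until it concludes that $a_{k_0}p_{k_0}$ is a divisor of $M$ with $a_{k_0}p_{k_0}\ge N$, i.e.\ an element of $D_{M,N}$. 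Your sketch contains no substitute for this bootstrapping through the higher rows.

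Moreover, the black box you hope to invoke does not exist in the generality you need, so the gap cannot be closed by citation. The main theorem of \cite{lamle00} controls only the \emph{number} of terms in a vanishing sum of roots of unity (Theorem~\ref{thm:sivek} is its refinement to distinct roots); neither gives a decomposition of a vanishing sum into cosets of prime-order subgroups, and no such general decomposition theorem can hold once $M$ has three distinct prime factors. For example, with $M=30$, the six distinct $30$th roots in $\zeta_5+\zeta_5^2+\zeta_5^3+\zeta_5^4+\zeta_6+\zeta_6^5=0$ (here $\zeta_6=-\zeta_3^2$ and $\zeta_6^5=-\zeta_3$) form a vanishing sum that contains no coset of a prime-order subgroup at all, hence is not a disjoint union of such cosets. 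Consequently, ``upgrading per-prime balance into a genuine coset structure for $A$'' must be extracted from the rows $n=3,\dots,N$ of \eqref{eq:htf-row-ortho-to-first} in a quantitative way --- exactly the inductive argument of the paper --- and this is the part your proposal leaves open. Until it is supplied, the implication ``$\Phi$ divisible $\Rightarrow D_{M,N}\neq\emptyset$'', and with it the characterization, remains unproved.
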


\begin{proof}
  Suppose that $\Phi$ is prime and let us prove that $D_{M, N}=\emptyset$. Assume by way of contradiction that there
  exists $ d \in D_{M, N} \neq \emptyset$, \ie $d\ge N$ is a divisor of $M$. Take $J_1 = I(d,1)$, where $I(d,1)$ is
  defined in \eqref{eq:divisor-index-sets}. Then, for $n \in \{2, \hdots, d\}$,
  \begin{align*}
    \sum_{m\in J_1} (\gamma_M^{n-1})^{m-1}= \sum_{k=0}^{d-1} \myexp{\tfrac{2\pi i}{M} (n-1) (k \tfrac{M}{d})}
    = \sum_{k=0}^{d-1} \myexp{2\pi i \tfrac{k(n-1)}{d}} = 0,
  \end{align*}
  where the last equality follows from~\eqref{eq:ross-hewitt}. Since $d \ge N$, we see by
  Lemma~\ref{lem:divisible-iff-vanishing-sums} that $\Phi$ is $(M,d)$-divisible with divisor
  $\Phi_1:=\Phi_{J_1}$ which is a contradiction.

  We now prove that if $\Phi$ is not prime, then $D_{M, N}\neq \emptyset.$ Thus assume that $\Phi$ is $(M,
  d)$-divisible for some $d$ such that $N\leq d \leq M-N$. Let $\Phi=\Phi_1 \cup \Phi_2$ be divisors of
  $\Phi$, where $\Phi_1=\Phi_{J_1}$ and $\Phi_2=\Phi_{J_2}$ with $\card{J_1}=d$ and $\card{J_2}=M-d$.  By
  Lemma~\ref{lem:divisible-iff-vanishing-sums}, we see that our assumption is equivalent to assuming the
  existence of a index set $J_1 \subset \set{1,\dots,M}$ of cardinality $d$ such that
  \begin{equation}
    \sum_{m\in J_1} (\gamma_M^{n-1})^{m-1}  = \sum_{m\in J_1^n} \gamma_M^{m-1}  = 0\label{eq:Phi1-divisible-sum-on-J1} 
  \end{equation}
  holds for each $n=2,\dots,N$, where $J_1^n:=(n-1)J_1 \mod M$. In particular, for $n=2$, this means that $M$
  is $d$-balancing, hence $d=\sum_{k=1}^r a_k p_k$ and $M-d=\sum_{k=1}^r b_k p_k$ for $a_k,b_k \in \N_0$. Let
  $R_1 \subset \set{1,\dots,r}$ be the indices $k$ for which $a_k\neq 0$. We see that the index set $J_1$ has
  the following form
  \[
  J_1 = \bigcup_{k \in R_1} \bigcup_{q_k \in Q_k} I(p_k,q_k) = \bigcup_{k \in R_1} I_k, \qquad \text{where
  }I_k:=\bigcup_{q_k \in Q_k} I(p_k,q_k),
  \]
  for some $Q_k \subset \set{1,\dots,M/p_k}$ with $\card{Q_k}=a_k$ for each $k \in R_1 $.  We can assume
  without loss of generality that, for $i,j\in R_1$,
  \begin{equation}
    n_i p_i \neq n_j p_j \qquad \text{for each $n_i=1,\dots,a_i$ and $n_j=1,\dots,a_j$,}\label{eq:assump-on-prime}
  \end{equation}
  whenever $i\neq j$.

  We need to prove that $D_{M, N}\neq \emptyset$. It follows from $d=\sum_{k \in R_1} a_k p_k$ that, if
  $p_{k_{0}}\geq N$ for some $k_0 \in R_1$, then $p_{k_{0}}\in D_{M,N}\neq \emptyset$, which concludes the
  proof. If however, $p_k<N$ for each $k \in R_1$, we claim that $a_{k}p_{k}\geq N$ for each $k \in R_1$. To
  show this claim, let $k_0 \in R_1$ be fixed and assume $p_k<N$ for all $k \in R_1$. Since $p_k<N$ is prime,
  it follows that
  \[
  \sum_{m\in I_{k}} (\gamma_M^{(n-1)})^{m-1} = 0
  \]
  for $n=2,\dots,p_k,p_k+2,\dots$ for $k \in R_1$.  We remark that $p_{k_0}$ is not a $n_j$-multiple of any of
  the other prime numbers $p_j$ for $n_j=1,\dots,a_j$ by equation~(\ref{eq:assump-on-prime}) which, in turn,
  implies that
  \[
  \sum_{m\in J_1\setminus I_{k_0}} (\gamma_M^{p_{k_0}})^{m-1} = 0
  \]
  for $n=p_{k_{0}}+1$.  Hence, it follows by the fact that $\Phi_1$ is a tight frame, that
  \begin{equation}
    \sum_{m\in I_{k_0}} (\gamma_M^{p_{k_0}})^{m-1}  = p_k \sum_{q \in Q_{k_0}} (\gamma_M^{p_{k_0}})^{q-1} =0.\label{eq:d1}
  \end{equation}
  The last equality is only possible if the index set $Q_{k_0}$ is of a certain size, in particular,
  $\card{Q_{k_0}}\ge 2$ must be a divisor of $M/p_{k_0}$.  If $M/p_{k_0}$ is prime, then
  $a_{k_0}=\card{Q_{k_0}}=M/p_{k_{0}}$ which is impossible. Hence, $M/p_{k_0}$ is not prime. Let $d_1$ be the
  smallest divisor of $M/p_{k_0}$. By (\ref{eq:d1}) we then see that $\card{Q_{k_0}} = n d_1$ for some $n \in
  \N$ since $Q_{k_0}$ must be a union of $n$ sets, say $D^1_\ell$, $\ell=1,\dots,n$, of cardinality $d_1$. If
  $d_1 p_{k_0}>N$, we are done. If not, we consider row $n=d_1p_{k_{0}}+1$ in
  \eqref{eq:Phi1-divisible-sum-on-J1}. Repeating the argument above leads to
  \begin{equation}
    \sum_{q \in Q_{k_0}} (\gamma_M^{d_1
      p_{k_0}})^{q-1} =0.\label{eq:d2}
  \end{equation}
  Note that $(\gamma_M^{d_1 p_{k_0}})^{q-1}$ is the \emph{same} $M$th root for all $q \in D^1_\ell$ when
  $\ell$ is fixed. If $M/(d_1 p_{k_0})$ is prime, then, by (\ref{eq:d1}) and (\ref{eq:d2}),
  $\card{Q_{k_0}}=M/p_{k_{0}}$ which is impossible. Hence, $M/(d_1 p_{k_0})$ is not prime, and we let $d_2$ be
  the smallest divisor of $M/(d_1 p_{k_0})$. By (\ref{eq:d1}) and (\ref{eq:d2}) we then see that
  $\card{Q_{k_0}} = n d_1 d_2$ for some $n \in \N$. We can continue the argument which proves the claim.

  By the proof of the claim, we also see that $a_{k_0}p_{k_0}$ is a divisor of $M$. Since we just proved
  $a_{k_0}p_{k_0}>N$, we arrive at the conclusion $a_{k_0}p_{k_0} \in D_{M,N}\neq \emptyset$.

  The proof of the last statement in Theorem~\ref{harmonic-prime1} is an easy consequence of the fact that if
  $M$ is prime, then $D_{M,N}=\emptyset.$
\end{proof}

\begin{remark}
\begin{compactenum}[(a)]
\item The last part of the above result states that a HTF $\Phi = \{\varphi_{k}\}_{k=1}^{M} \subset \C^N$ with
  $M\geq 2N$ is prime if $M$ is prime.  The converse of this is not true. Indeed, consider the HTF with $M=9$
  and $N=4$, see Example~\ref{exa:D-P-S-sets}(c). This frame is prime, but $M$ is not.
\item Theorem~\ref{thm:sivek} classifies all $k$ for which $M$ is $k$-balancing, but it does not give how to
  choose the $k$ distinct roots out of the $M$ roots of unity. What the proof of Theorem~\ref{harmonic-prime1}
  shows is that for a HTF to be $(M, d)$-divisible, in addition to having a vanishing sub-sum of $d$ distinct
  $M$th roots of unity, we must also ensure that the sub-sum of every $n$th power of \emph{these} roots of
  unity vanishes for $n=2,\dots, N-1$. We refer to Corollary~\ref{thm:harmonic-factors} below for a statement
  on how to choose the $d$ distinct roots of unity so that all these sub-sums vanishes.
\end{compactenum} 
\end{remark}

In case the HTF is divisible \ie $D_{M, N}\neq \emptyset$, the following result tells us how to factor it into
prime divisors. The proof follows from the proof of Theorem~\ref{harmonic-prime1}.

\begin{corollary}
  \label{thm:harmonic-factors}
  Let $N,M \in \N$ and $s>0$ be given, and let $\Phi = \{\varphi_{k}\}_{k=1}^{M}=\HTF(N,M,s) \subset
  \C^N$. Suppose that $D_{M,N}\neq \emptyset$ and $d<M$.  Then the following assertions are equivalent:
  \begin{compactenum}[(i)]
  \item $d \in P_{M,N}$,
  \item $\Phi$ factors into $M/d$ prime FUNTFs each of cardinality $d$, that is,
    \[
    \Phi=\bigcup_{i=1}^{M/d} \Phi_i
    \]
    with $\Phi_i$ being prime and $\card{\Phi_i}=d$.
  \end{compactenum}
  Furthermore, one of the factors from assertion~(ii), say $\Phi_1$, is the HTF with $d$ frame vectors is
  $\C^N$, that is, $\Phi_1=\HTF(N,d,s)$.  Let $U=\diag{\gamma^0,\gamma^1, \dots, \gamma^{N-1}} \in
  \text{U}(N)$, where $\gamma=\exp{(2\pi i/M)}$. The other factors are then given as $\Phi_i=U^{i-1}\Phi_1$
  for $i=1,\dots,M/d$.
\end{corollary}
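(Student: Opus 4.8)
The plan is to reduce the whole statement to two already-available facts: the explicit canonical partition of $\Phi$ given by the index sets $I(d,q)$, and the primality criterion $D_{M,N}=\emptyset$ from Theorem~\ref{harmonic-prime1}. I would prove the ``furthermore'' clause first, since it powers the equivalence. For a divisor $d$ of $M$ with $d\ge N$, the sets $I(d,q)$, $q=1,\dots,M/d$, partition $\set{1,\dots,M}$ into $M/d$ blocks of size $d$. Writing a column $m\in I(d,q)$ as $m-1=k\tfrac{M}{d}+(q-1)$ and using $\gamma^{M/d}=\myexp{2\pi i/d}$, the entry of $\Phi_{I(d,q)}$ in row $n$ and block-column $k$ is $\sqrt{s/N}\,\gamma^{(n-1)(q-1)}\myexp{2\pi i(n-1)k/d}$. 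For $q=1$ this is exactly $\HTF(N,d,s)$, and for general $q$ the extra scalar $\gamma^{(n-1)(q-1)}$ depends only on the row, so $\Phi_{I(d,q)}=U^{\,q-1}\Phi_{I(d,1)}$ with $U=\diag{\gamma^0,\dots,\gamma^{N-1}}$. Hence each block is a FUNTF unitarily equivalent to $\HTF(N,d,s)$; by Proposition~\ref{thm:unitary-equi} the primality of every block is equivalent to the primality of $\HTF(N,d,s)$, which this clause names as the distinguished factor $\Phi_1$.

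Next I would isolate the purely number-theoretic fact that, for $d\in D_{M,N}$ and $N\ge2$,
\[
d\in P_{M,N}\iff D_{d,N}=\emptyset .
\]
The point is that $D_{M,N}\neq\emptyset$ forces $M\ge 2N$, and any proper divisor $c$ of $d$ with $c\ge N$ satisfies $c\le d/2\le d-N$; thus ``$d$ has no proper divisor lying in $D_{M,N}$'' and ``$d$ has no divisor in $\set{N,\dots,d-N}$'' express the same condition. Combining this with Theorem~\ref{harmonic-prime1} applied to the smaller frame $\HTF(N,d,s)$ (prime exactly when $D_{d,N}=\emptyset$) yields the implication (i)$\implies$(ii): if $d\in P_{M,N}$ then $\HTF(N,d,s)$ is prime, so by the first paragraph all $M/d$ blocks $\Phi_{I(d,q)}$ are prime FUNTFs of cardinality $d$, which is the asserted factorization.

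For the converse (ii)$\implies$(i) I would first extract the easy constraints: a factorization into $M/d$ pieces of cardinality $d$ forces $d\mid M$, and since a piece together with its (tight) complement must each span $\K^N$, we get $N\le d\le M-N$; hence $d\in D_{M,N}$. It remains to rule out $d\in D_{M,N}\setminus P_{M,N}$, i.e. $D_{d,N}\neq\emptyset$. If (ii) is read as the canonical factorization of the first paragraph, this is immediate: the canonical blocks are unitarily equivalent to $\HTF(N,d,s)$, so their primality forces $\HTF(N,d,s)$ prime, hence $D_{d,N}=\emptyset$ and $d\in P_{M,N}$. The substantive task, which I expect to be the main obstacle, is to handle an \emph{arbitrary} size-$d$ factorization: I would show that when $d\in D_{M,N}\setminus P_{M,N}$ \emph{every} tight sub-collection $\Phi_{J_1}\subset\Phi$ of cardinality $d$ is itself divisible, so no size-$d$ piece can be prime. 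By Lemma~\ref{lem:divisible-iff-vanishing-sums} this is a statement about the sums $\sum_{m\in J_1}(\gamma^{n-1})^{m-1}=0$ for $n=2,\dots,N$, and the delicate content is precisely that a proper sub-collection must make these vanish for \emph{all} such $n$ at once, not merely for $n=2$.

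To overcome this obstacle I would reuse the structural analysis in the proof of Theorem~\ref{harmonic-prime1}: the $n=2$ condition makes $M$ be $d$-balancing and forces $J_1$ to be a union of prime cosets $I(p_k,q_k)$, while the higher conditions constrain the multiplicities exactly as in that proof. The clean case is when some prime $p_k\ge N$ occurs with multiplicity at least two, for then a single coset $I(p_k,q_k)$ is already a proper tight sub-frame of $\Phi_{J_1}$ of size $p_k\in D_{M,N}$. The genuinely delicate case is the single-prime pattern $J_1=\bigcup_{q\in Q_{k_0}}I(p_{k_0},q)$ with $p_{k_0}<N$, where one must group the cosets according to the smallest divisor $d_1$ of $M/p_{k_0}$ and iterate the grouping argument of Theorem~\ref{harmonic-prime1} to produce a proper tight sub-collection whose size is a divisor of $M$ in $\set{N,\dots,d-N}$. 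Once this structural fact is in hand, primality of any size-$d$ piece forces $D_{d,N}=\emptyset$, hence $d\in P_{M,N}$, completing the equivalence.
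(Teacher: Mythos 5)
Your proposal is correct and follows essentially the same route as the paper, whose entire proof consists of the remark that the corollary ``follows from the proof of Theorem~\ref{harmonic-prime1}'': the coset blocks $I(d,q)$ give the canonical factorization $\Phi_{I(d,q)}=U^{q-1}\HTF(N,d,s)$, primality of the blocks is read off from Theorem~\ref{harmonic-prime1} applied to $\HTF(N,d,s)$ together with Proposition~\ref{thm:unitary-equi}, and the converse reuses the structural coset/grouping analysis of arbitrary tight sub-collections from that same proof. If anything, you are more careful than the paper: you make explicit the reduction $d\in P_{M,N}\iff D_{d,N}=\emptyset$ and the need to rule out prime pieces in an \emph{arbitrary} size-$d$ factorization, a point the paper leaves implicit.
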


By Corollary~\ref{thm:harmonic-factors} we know exactly how the prime ``building blocks'' of a divisible HTF
look, hence we can use this structure to build larger divisors of the HTF. Suppose $\Phi$ is a HTF and $d \in
D_{M, N}\neq \emptyset$. Then $\Phi$ is $(M,d)$-divisible, and, moreover, $\set{\varphi_i}_{i \in I(d,q)}$ is
a tight frame for any $q=1,\dots,M/d$. Now, we can combine these $M/d$ tight frames of cardinality $d$ into
tight frames of cardinality $d,2d,3d, \dots, M$. Hence, $\Phi$ is also $(M,nd)$-divisible for
$n=1,\dots,M/d-1$. Assume further that $M$ has another divisor which are also greater than $N$, say $\tilde d
\in D_{M,N}$. We can then combine unions of $\set{\varphi_i}_{i \in I(d,q)}$ with unions of
$\set{\varphi_i}_{i \in I(\tilde{d},\tilde{q})}$, where $q=1,\dots,M/d$ and $\tilde{q}=1,\dots,M/\tilde{d}$,
respectively, as long as the index sets are disjoint. Hence, to combine tight frames from different divisors,
we only need to make sure that the same frame element $\varphi_i$ does not appear in both frames. We make
these observations precise in the following result.

\begin{theorem}
  \label{thm:harmonic-prime-and-divisors2}
  Let $M\geq N\geq 2$, and $\Phi = \{\varphi_{k}\}_{k=1}^{M} \subset \C^N$ be a HTF. If $D_{M,N}\neq
  \emptyset$, then $\Phi$ is $(M,s)$-divisible for each $s \in S_{M,N}$.
\end{theorem}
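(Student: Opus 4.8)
The plan is to translate the statement into a packing problem for cosets in the cyclic group $\Z/M\Z$ and then solve that problem. By Lemma~\ref{lem:divisible-iff-vanishing-sums}, $\Phi$ is $(M,s)$-divisible exactly when there is an index set $J_1\subset I=\set{1,\dots,M}$ with $\card{J_1}=s$ and $\sum_{m\in J_1}(\gamma^{n-1})^{m-1}=0$ for $n=2,\dots,N$. The computation in the proof of Theorem~\ref{harmonic-prime1} shows that every index set $I(q,r)$ with $q\in D_{M,N}$ (in particular with $q\in P_{M,N}$) satisfies these vanishing sums, and the sums are additive over disjoint index sets. Hence any disjoint union of such sets whose total cardinality equals $s$ is an admissible $J_1$, and by Theorem~\ref{thm:splits-into-two-tight-frames} the complement is then automatically a tight frame. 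So it suffices to \emph{produce} one disjoint union of sets $I(q_k,r)$ with $q_k\in P_{M,N}$ of cardinality exactly $s$; the form of the complement is irrelevant.

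Identifying $I$ with $\Z/M\Z$, the set $I(q,r)$ is precisely the coset $r+H_q$ of the unique subgroup $H_q=\langle M/q\rangle$ of order $q$. Since $s\in S_{M,N}$, I may fix nonnegative integers with $s=\sum_k a_k q_k$ and $M-s=\sum_k b_k q_k$ where $q_k\in P_{M,N}$. The goal becomes: choose, for each $k$, a family of $a_k$ cosets of $H_{q_k}$ so that all chosen cosets are pairwise disjoint. The only obstruction is disjointness between cosets of \emph{different} subgroups: a coset of $H_q$ and a coset of $H_{q'}$ must meet whenever $\operatorname{lcm}(q,q')$ is too large, and they always meet when $\operatorname{lcm}(q,q')=M$. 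This is why the arithmetic expressibility of $s$ by itself cannot be sufficient.

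To handle disjointness I would argue recursively with a block decomposition. If the chosen representation of $s$ uses divisors whose $\operatorname{lcm}$ is a proper divisor $L<M$, split $\Z/M\Z$ into its $M/L$ cosets of $H_L$; since each active $H_{q_k}\le H_L$, every block is partitioned by the cosets of the $H_{q_k}$ it contains, so the packing \emph{decouples} across blocks and distinct subgroups may be mixed freely across distinct blocks, where disjointness costs nothing. Recursing inside a single block (a copy of $\Z/L\Z$) reduces everything to smaller instances of the same problem; pure representations, using a single $q_k$, form the trivial base case, packable by selecting $a_k$ of the $M/q_k$ available cosets of $H_{q_k}$.

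The step I expect to be hardest is purely number-theoretic: to show that every $s\in S_{M,N}$ admits a representation $s=\sum_k a_k q_k$ whose active divisors $\setprop{q_k}{a_k>0}$ have $\operatorname{lcm}$ strictly less than $M$, so that the block decomposition actually applies. The extreme failure, where every representation of $s$ is forced to span all of $\Z/M\Z$, is exactly the non-packable case, e.g.\ the $\HTF$ with $M=10,\,N=2$, which is not $(10,7)$-divisible even though $7=2+5$. This is where the hypothesis that $M-s$ is \emph{also} a nonnegative combination $\sum_k b_k q_k$ is indispensable: it supplies the balancing identity $M=\sum_k(a_k+b_k)q_k$, which I would carry through the block reduction, by induction on the number of prime factors of $M$, to produce such a decomposable representation and complete the recursion. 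Upgrading the two arithmetic expressibility conditions into a single honest geometric packing, while controlling the intermediate partially entangled cases (where some, but not all, pairs $q_k,q_{k'}$ have $\operatorname{lcm}<M$), is the delicate part of the argument.
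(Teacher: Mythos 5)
Your reduction---via Lemma~\ref{lem:divisible-iff-vanishing-sums}, divisors of the HTF correspond to packings of $\Z/M\Z$ by disjoint cosets $r+H_q$ with $q\in P_{M,N}$---is exactly the strategy of the paper's proof, which builds $a_k$ ``shifted'' copies of the index set $I(q_k,1)$ for each $k$ and concatenates them. The obstruction you isolate is also genuinely there: the paper's proof nowhere verifies that cosets attached to \emph{different} divisors $q_k$ can be chosen disjoint, and this is precisely the step you call delicate. But your proposal does not supply that verification either: everything rests on the unproven claim that each $s\in S_{M,N}$ admits a representation $s=\sum_k a_kq_k$ whose active divisors have $\operatorname{lcm}$ strictly smaller than $M$, plus a block recursion that you only sketch (``I would carry through\dots''). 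As written, this is a plan rather than a proof, and the missing step is the entire mathematical content.

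Moreover, the deferred claim is false---and so is the theorem, so the gap cannot be closed by any argument. Take $M=36$, $N=4$, so $D_{36,4}=\set{4,6,9,12,18}$ and $P_{36,4}=\set{4,6,9}$, and take $s=13$. Then $s\in S_{36,4}$, since $13=4+9$ and $36-13=23=2\cdot 4+6+9$; yet $13=4a+6b+9c$ forces $c=1$ (parity) and then $(a,b)=(1,0)$, so $4+9$ is the \emph{only} representation, and $\operatorname{lcm}(4,9)=36$ means every coset of $H_4$ meets every coset of $H_9$. Hence no coset packing of size $13$ exists and your recursion has no starting representation. In fact \emph{no} $13$-element set $J\subset\Z/36\Z$ satisfies $\sum_{j\in J}\gamma^{kj}=0$ for $k=1,2,3$, where $\gamma:=\myexp{2\pi i/36}$: identify $\Z/36\Z$ with $\Z/4\Z\times\Z/9\Z$, let $A_u\subset\Z/9\Z$ be the fibers of $J$ over $u\in\Z/4\Z$ and $g_u(y)=\sum_{v\in A_u}y^v$. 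Since $i\notin\Q(\zeta_9)$ and $i\notin\Q(\zeta_3)$, the conditions for $k=1$ and $k=3$ force $g_0-g_2$ and $g_1-g_3$ to vanish at all primitive ninth and all primitive third roots of unity; having coefficients in $\set{-1,0,1}$ and degree at most $8$, each difference must then be $0$ or $\pm(1+y+\cdots+y^8)$, i.e.\ each pair $(A_0,A_2)$, $(A_1,A_3)$ is either equal or equal to $\set{\Z/9\Z,\emptyset}$. The parity of $13$ excludes ``both pairs equal'' and ``both pairs exceptional''; in the remaining case, say $A_0=A_2=A$ with $\card{A}=2$ and $g_1+g_3=1+y+\cdots+y^8$, the $k=2$ condition forces $\sum_{v\in A}\zeta_9^{\,v}=0$, impossible for two ninth roots of unity. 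So $\HTF(4,36,1)$ is not $(36,13)$-divisible: Theorem~\ref{thm:harmonic-prime-and-divisors2} fails, and it fails exactly at the cross-divisor disjointness you identified (for $N=2$ only one vanishing condition is in play, and Corollary~\ref{thm:harmonic-divisors-N2} remains correct by Sivek's theorem). In short: your diagnosis of where the difficulty lies is right, and it is in fact a flaw in the paper's own proof; but your hoped-for rescue via the hypothesis on $M-s$ does not work, because that hypothesis does not exclude the non-packable configurations once $N\ge 3$.
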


\begin{proof}
  If $ D_{M, N} \neq \emptyset$, then $\Phi$ is $(M, d)$-divisible for any divisor $d$ of $M$ such that $N\leq
  d \leq M-N$. We now show that $\Phi$, in fact, is $(M, s)$-divisible for each $s \in S_{M, N}$. By symmetry
  of the set $S_{M,N}$, it suffices to show that $\Phi$ is $(M,s)$-divisible for $s \in S_{M, N}$ with $s \le
  M/2$. For $s \in S_{M,N}$ we have $s=\sum_{k=1}^{K}a_k q_k$ with $a_k \in \N_0, q_k \in P_{M, N}$ and
  $K=\card{P_{M,N}}$. For each $k=1, 2, \hdots k$ construct an $N \times q_{k}$ matrix $\Phi_{\ell}^{(p_{k})}$
  based on the first $q_k${th} roots of unity. By shifting this matrix $a_k -1$ times, we will have a
  collection of $a_k$ such matrices. Next define $$\Phi_{k}=\begin{bmatrix} \Phi^{(q_{k})}_1 &
    \Phi^{(q_{k})}_2 & \cdots & \Phi^{(q_{k})}_{a_{k}}\end{bmatrix}.$$ Now $\Phi_{k}$ is an $N \times
  a_{k}q_k$ matrix whose rows are mutually orthogonal. We then 
  obtain an $N\times d$ matrix $$\widetilde{\Phi_{1}}=\begin{bmatrix}\Phi_1 & \Phi_2 & \cdots &
    \Phi_r\end{bmatrix}$$ which is a FUNTF. Hence, $\Phi$ is $(M, s)$-divisible.
\end{proof}

Casazza and Kova\v{c}evi\'{c} introduced the notion of generalized harmonic frames in~\cite{cako03} and showed
that these frames are unitarily equivalent to HTFs. Consequently, by Proposition~\ref{thm:unitary-equi}, the
results obtained in this section for HTFs extend to a classification of prime and divisible generalized HTFs.

\subsection{Computational aspects of harmonic tight frames}
\label{sec:comp-aspects-HTFs}

For harmonic tight frames the prime building blocks are exactly described by
Corollary~\ref{thm:harmonic-factors}.  We wish to suggest a strategy that can be used to design efficient
numerical tools for signal processing with divisible HTFs. Recall that, in analyzing a signal $x \in \C^N$
with any type of frame, one needs to compute $c=\set{\innerprod{x}{\varphi_i}}_{i=1}^M$. A na\"ive way of
computing the analysis step for divisible harmonic tight frames would be to zero pad $x\in \C^N$ into a vector
$\hat x \in \C^M$ and then compute a full FFT of $\hat x$ of size $M$. However,
Corollary~\ref{thm:harmonic-factors} suggests a more effective analysis (and synthesis) process.  Let
$\Phi=\set{\varphi_i}_{i=1}^M$ be a divisible HTF.  Suppose $p \in P_{M,N}$, where the set $P_{M,N}$ is
defined by ~\eqref{eq:def-P_MN}. The indices $I_q:=I(p,q)=\setpropsmall{kM/p+q}{k=0,1,\dots, p-1}$ for each
$q=1,\hdots,M/p$ are then the index sets of the $M/p$ prime factors $\Phi_{I_q}:=\set{\varphi_i}_{i\in
  I_q}$. By Corollary~\ref{thm:harmonic-factors} these prime factors are closely related, in fact,
$\Phi_{I_q}=U^{q-1}\Phi_{I_1}$, where $U=\diag{\gamma^0,\gamma^1, \dots, \gamma^{N-1}} \in \text{U}(N)$ is a
unitary, diagonal matrix and $\gamma$ the $M$th root of unity. Note that $\Phi_{I_1}$ is first $N$ rows of a
$p \times p$ discrete Fourier matrix (up to scaling).  Therefore, the analysis process of computing
$\set{\innerprod{x}{\varphi_i}}_{i=1}^M$ for some signal $x \in \C^N$ can be performed by first computing
$y_q:=(U^\ast)^{q-1}x \in \C^N$ for each $q=1,\dots,M/p$, then computing $c_{I_q}:=\Phi_{I_1}^{\,\ast} y_q \in
\C^p$ for $q=1,\dots,M/p$, and finally combining $c_{I_q}$ for each $q=1,\dots,M/p$ into $c$. The first step
$(U^\ast)^{q-1}x$ is fast and stable since $U$ a is diagonal unitary, the second step is $M/p$ FFTs of size
only $p$, and the last step is simply a rearrangement of the coefficients.  The number of operations needed in
this decomposition strategy is of the order of $M\log_{2}p$ which should be compared to $O(M \log_2 M)$
operations for the na\"ive strategy. However, more importantly, this decomposition can be implemented as a
parallel algorithm for each $q=1,\dots,M/p$, where we only have $O(p\log_{2}p)$ operations on each processor;
this will lead to a significant speed-up in the analysis step in multi-core and multiprocessing systems. A
similar speed-up procedure can be used for the synthesis process.

In addition, recall that the worst-case \emph{coherence} of a FUNTF $\Phi$ is given by
\[
\mu_{\Phi}=\max_{k\neq \ell}\abs{\innerprod{\varphi_k}{\varphi_\ell}}.
\]
Given $M \ge N$, the coherence of $\Phi=HTF(N,M,1)$ is easily computed as
\[ 
\mu_\Phi= \frac{1}{N}\max_{k\neq \ell} \abs{\sum_{n=0}^{N-1}\gamma_{M}^{n(k-\ell)}}= \frac{1}{N} \frac{\sin
  (\pi N/M)}{\sin (\pi /M)}=:\mu_{N,M}
\] 
since the maximum is obtained for $\abs{k-\ell}=1$.  If $\Phi=HTF(N,M,1)$ is divisible, then for each $p\in
P_{M, N}$, each of its $M/p$ prime HTFs have the same coherence $\mu_1$, \ie 
\[ 
\mu_1=\mu_{N,p}=\frac{1}{N} \frac{\sin (\pi N/p)}{\sin (\pi /p)},
\] 
which clearly satisfies $\mu_1 \leq \mu_{\Phi}$. In fact, we always have $\mu_1<\mu_{\Phi}$ if $P_{M,N}\neq
\emptyset$. Clearly, as the redundancy grows, \ie $M\to \infty$ with $N$ fixed, we see that $\mu_\Phi \nearrow
1$.  Thus decomposing divisible HTFs in their prime factors, results in these divisors having smaller
coherence, \eg if $p=dN$, we see that $\mu_1 \searrow d \sin{(\pi/d)}/\pi$ as $N \to \infty$. Moreover, for
the typical range $N \le p \le 2N$, we see that $\mu_1 \le 2/3$ for any $N \ge 3$.

\subsection{Spectral Tetris frames}
\label{sec:spectral-tetris}

Recently, Casazza, Fickus, Mixon, Wang, and Zhou~\cite{cfmwz11} introduced the \emph{spectral tetris} method
as a mean to construct FUNTFs for $\R^N$ for any given $N,M \in \N$ satisfying $M \ge 2N$. One of the key
features of this class of frames is that they are \emph{sparse} in the sense that the coefficient vector of
each frame element with respect to an orthonormal basis contains only few nonzero entries~\cite{cchkp}.  For
example, when $N=4$ and $M=11$, the spectral tetris construction yields the frame $\Phi=\set{\varphi_{i}}_{
  \in I}$ with $I = \set{1,2, \dots, 11}$:
\begin{equation}
 \Phi =
\begin{bmatrix}
  1 & 1 & \sqrt{\frac{3}{8}} & \sqrt{\frac{3}{8}} & 0 & 0 & 0 & 0 &
  0 & 0 & 0 \\
  0 & 0 & \sqrt{\frac{5}{8}} & -\sqrt{\frac{5}{8}} & 1 &
  \sqrt{\frac{1}{4}} & \sqrt{\frac{1}{4}} & 0 & 0 & 0 & 0 \\
  0 & 0 & 0 & 0 & 0 & \sqrt{\frac{3}{4}} & -\sqrt{\frac{3}{4}} & 1 &
  \sqrt{\frac{1}{8}} & \sqrt{\frac{1}{8}} & 0 \\
  0 & 0 & 0 & 0 & 0 & 0 & 0 & 0 & \sqrt{\frac{7}{8}} &
  -\sqrt{\frac{7}{8}} & 1
\end{bmatrix},\label{eq:STF-4-11}
\end{equation}
where the frame vectors of $\Phi$ appears as columns in the matrix. Observe that the first two vectors are
identical, which might be an undesirable for encoding because they lead to transform coefficients
$\innerprod{f}{\varphi_i}$ carrying no new information~\cite{Goyal:2001aa}.  Since $E_4=\set{\varphi_i}_{i\in
  I_1}$, $I_1=\{1,5,8,11\}$, is an orthonormal basis, we see that $\Phi_1=\Phi\setminus E_4$ is a tight frame
with redundancy reduced to $7/4$, and this can not be reduced further if tightness should be preserved.  The reduced tight frame $\Phi_2$ does
not have the artifact of $\Phi$ with respect to repetition of vectors. Moreover, it is a tight frame with
redundancy $M/N$ less than two, something that is not possible using spectral tetris algorithms from
\cite{cchkp,cfmwz11}. Hence, this reduced tight frame possesses additional desirable properties as compared to
the original tight frame, without loosing the sparsity of the original frame $\Phi$.

We prove below that all spectral tetris tight frames can be decomposed similarly, and we characterize all
prime spectral tetris frames. This characterization ultimately allows us to determine precisely when the
spectral tetris construction works for $M/N \le 2$ (Corollary~\ref{cor:spectr-tetris-redund-less-two}).

Before proving our main results we recall how the spectral tetris method works. Given any $N, M \in \N$
satisfying $M \ge 2N$, let $\lambda=M/N$. The method was developed in \cite{cfmwz11} and extended in
\cite{cchkp}. Here, we shall consider spectral tetris frames constructed from the algorithms in \cite{cfmwz11,
  cchkp} under the usual sparsity setup that the ``tetris blocks'' are of size $1 \times 1$ and $2 \times 2$.

We define $K=\setprop{k_n}{n=0,1,\dots,\gcd{\set{N,M}}}$, where $k_n =\frac{n N}{\gcd{\set{N,M}}}$.  For $N,M
\in \N$ such that $\lambda=M/N \ge 2$ the spectral tetris frame $\mathrm{STF}(N,M)$ in $\R^N$ is then given as
the $M$ columns of
\[
\begin{bmatrix}
   1 & \cdots & 1 & a_1 & a_1 & & & & & & &  & & & & & & & \\
   & & & b_1 & -b_1 & 1  & \cdots & 1  & a_2 & a_2 & & & & & & & & &  \\
   & & & & & & & & b_2 & -b_2 &  1    & \cdots  & & & & & & &\\
   & & & & & & & &  &  &  &   &  & &   & & & &\\
   & & & & & & & &  &   & &  & \cdots & 1  & a_{N-1} & a_{N-1} & & &   \\
     \multicolumn{3}{c}{\myunderbrace{\phantom{1 \quad \cdots \;\;
           1}}{m_1 \text{ times}}} &  & &
     \multicolumn{3}{c}{\myunderbrace{\phantom{1 \quad \cdots \;\;
           1}}{m_2 \text{ times}}} & & & & &  &  & b_{N-1} & -b_{N-1} & \multicolumn{3}{c}{\myunderbrace{1 \;\;\; \cdots \;\;\;
           1}{m_N \text{ times}}} 
\end{bmatrix},\medskip
\]
where $a_j:=\sqrt{\frac{r_j}{2}}$ and $b_j:=\sqrt{1-\frac{r_j}{2}}$, and $r_j \in \itvco{0}{1}$ and $m_j \in
\N_0$, $j=1,\dots,N$ are defined by
\begin{align}
  \lambda &= m_j + r_j, && \text{when $j-1\in K$} ,
  \label{eq:m-and-r-in-K}  \\
  \lambda &= (2-r_{j-1}) + m_j + r_j, && \text{otherwise}. \label{eq:m-and-r--not-in-K}
\end{align}
If $j\in K$, the $2\times 2$-block matrix $B_j=\left[
  \begin{smallmatrix}
    a_j & a_j \\ b_j & -b_j
  \end{smallmatrix} \right]$ is left out. Note that $r_j=0$ exactly when $j \in K$.

The following result characterizes prime spectral tetris tight frames.

\begin{theorem}
\label{thm:factorization-of-STF}
Let $N,M \in \N$ such that $\lambda:=M/N\ge 2$ be given.  Suppose $\Phi$ is a spectral tetris FUNTF of $M$
vectors in $\R^N$ associated with $\lambda$.  Then, either $\Phi$ is prime, or $\Phi$ factors as
\[ \Phi = \Phi_1 \cup \bigcup_{l=1}^L E_N,
\] 
where $\Phi_1$ is a prime FUNTF of $M-LN$ vectors and
$E_N=\set{e_j}_{j=1}^N$ is the standard orthonormal basis for $\R^N$. Moreover,
$\Phi$ is divisible exactly when the eigenvalue $\lambda$ of the frame
operator of $\Phi$ satisfy
\begin{equation}
 j\lambda - \floor{(j-1)\lambda} \ge 3 \qquad \text{for all $1<j<\frac{N}{\gcd{(N,M)}}$}.\label{eq:spec-tetris-cond}
 \end{equation}
In particular, $\Phi$ is divisible if $\lambda\ge 3-\frac{1}{N}$, \ie $M \ge 3N-1$. 
\end{theorem}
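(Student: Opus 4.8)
The plan is to reduce to the coprime case and then exploit the rigid structure forced by the tetris blocks. The set $K$ records exactly the rows $j$ at which the block $B_j$ is omitted (equivalently $r_j=0$), and its interior elements $k_1,\dots,k_{g-1}$, where $g=\gcd(N,M)$, are equally spaced $N/g$ apart. Since a column $e_j$ meets only row $j$ and a block $B_j$ meets only rows $j,j+1$, omitting $B_{k_n}$ decouples the matrix: $\Phi$ is block diagonal, a direct sum of $g$ identical copies of $\STF(N/g,M/g)$ in orthogonal coordinate subspaces. A tight subset must carry the same constant $c\in\itvoo{0}{\lambda}$ on every row, so its restriction to each chunk is again tight and proper with that same $c$; conversely a proper tight division of one chunk repeats verbatim in all $g$ chunks. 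Hence $\Phi$ is divisible iff the single coprime chunk $\STF(N/g,M/g)$ is, and I would assume $\gcd(N,M)=1$ from here, in which case every block $B_1,\dots,B_{N-1}$ is present and each $r_j>0$.

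Next I would unwind the recursion \eqref{eq:m-and-r-in-K}--\eqref{eq:m-and-r--not-in-K} to obtain $r_j=\set{j\lambda}$ (fractional part) and $m_j=\floor{j\lambda}-\floor{(j-1)\lambda}-2$ for $2\le j\le N-1$, together with $m_1=\floor{\lambda}\ge 2$ and $m_N=\floor{\lambda}-1\ge 1$. A one-line computation gives $j\lambda-\floor{(j-1)\lambda}=m_j+r_j+2$, so \eqref{eq:spec-tetris-cond} is equivalent to $m_j\ge 1$ for every $1<j<N$. The theorem therefore reduces to the statement that a coprime STF is divisible iff $m_j\ge 1$ for all $j$. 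One direction is immediate: if every $m_j\ge 1$ then $E_N=\set{e_1,\dots,e_N}$ is a Parseval subframe, and Theorem~\ref{thm:splits-into-two-tight-frames} makes its complement tight as well, so $\Phi$ is divisible.

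The crux is the converse, which I would obtain from a rigidity lemma: in a coprime STF every tight subset $S$ keeps either all blocks or none. First, orthogonality of consecutive rows $j,j+1$ forces each $B_j$ to lie wholly in $S$ or wholly in its complement, since its two columns are the only ones meeting both rows and splitting them contributes $\pm a_j b_j\ne 0$. Writing $\sigma_j\in\set{0,1}$ for the kept blocks and $s_j$ for the retained copies of $e_j$, the equal-norm condition reads $c=s_j+\sigma_{j-1}(2-r_{j-1})+\sigma_j r_j$, with the conventions $\sigma_0=\sigma_N=0$. Row $1$ gives $c=s_1+\sigma_1 r_1$, so $c\in\Z$ when $\sigma_1=0$ and $c$ has fractional part $r_1>0$ when $\sigma_1=1$; moreover any row met by no kept block satisfies $c=s_j\in\Z$. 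Since the fractional parts $r_1,\dots,r_{N-1}$ are pairwise distinct by coprimality, a short case analysis on the first kept or omitted block forces $\sigma_1=\dots=\sigma_{N-1}$: if $\sigma_1=0$, the first kept block would place a positive fractional part on an integer $c$; if $\sigma_1=1$, an omitted block $B_{j_0}$ would leave its lower row with fractional part $r_{j_0+1}=r_1$, impossible for $j_0+1>1$. This fractional-part bookkeeping is the technical heart of the proof.

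With the lemma in hand the converse follows quickly: a block-free tight subset consists only of $e_j$'s, forcing $s_j=c$ constant with $c\le\min_j m_j$, while the all-block case is the identical statement applied to the (tight) complement; either way a proper tight division requires $m_j\ge 1$ for all $j$. For the factorization I would peel off $L=\min_j m_j$ copies of $E_N$ one at a time, each step tight by Theorem~\ref{thm:splits-into-two-tight-frames}; the residue $\Phi_1$ then has some $m_j=0$, hence is prime by the same lemma, and contains $M-LN$ vectors. Finally, $j\lambda-\floor{(j-1)\lambda}=\lambda+\set{(j-1)\lambda}$, and for $1<j<N/\gcd(N,M)$ coprimality gives $\set{(j-1)\lambda}\ge 1/N$; hence $\lambda\ge 3-\tfrac1N$, that is $M\ge 3N-1$, already forces \eqref{eq:spec-tetris-cond}, giving the stated sufficient condition.
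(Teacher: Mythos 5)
Your proof is correct, but it follows a genuinely different route from the paper's. The paper's argument is local: assuming some $e_{j_0}$ is absent (\ie $m_{j_0}=0$), it looks only at the two adjacent blocks $B_{j_0-1}$, $B_{j_0}$, uses row-orthogonality to force each block wholly into one divisor, spanning to force them into \emph{different} divisors, and then a row-norm comparison ($r_{j_0}\ge 2-r_{j_0}$) to get a contradiction with $r_{j_0}\in\itvco{0}{1}$; crucially, this works without any coprimality reduction. The translation of ``$e_j\in\Phi$ for all $j$'' into the arithmetic condition \eqref{eq:spec-tetris-cond} is then \emph{cited} from the Kutyniok--Krahmer--Lemvig preprint rather than proved. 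You do three things differently: (1) you reduce to $\gcd(N,M)=1$ by exhibiting the STF matrix as a direct sum of $\gcd(N,M)$ identical copies of $\STF(N/g,M/g)$, whereas the paper handles the chunk structure only implicitly through the citation and a periodicity remark; (2) you solve the recursion \eqref{eq:m-and-r-in-K}--\eqref{eq:m-and-r--not-in-K} explicitly ($r_j=j\lambda-\floor{j\lambda}$, $m_j=\floor{j\lambda}-\floor{(j-1)\lambda}-2$), which makes the equivalence with \eqref{eq:spec-tetris-cond} self-contained instead of outsourced; (3) your key step is a global rigidity lemma --- every tight subset keeps all blocks or none --- proved by fractional-part bookkeeping of the common row norm $c$ (your case analysis is sound: an omitted block after a kept one forces $\{c\}\in\set{0,\,r_{j_0+1}}$, both incompatible with $\{c\}=r_1$ by coprimality). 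The trade-off: the paper's local argument is shorter and needs no reduction, while your rigidity lemma classifies \emph{all} tight subsets of an STF (unions of copies of $E_N$ and their complements), from which the factorization $\Phi=\Phi_1\cup\bigcup_{l=1}^L E_N$, the primeness of the residue $\Phi_1$, and the final sufficient condition $\lambda\ge 3-\tfrac1N$ all drop out without further work. One point to tighten if you write this up: your coprime reduction as stated settles divisibility, so you should say explicitly (as your chunk argument indeed allows) that the factorization and the primeness of the residue also transfer back from the chunks to the full frame.
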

\begin{proof}
  Let $\setsmall{e_j}_{j=1}^N$ denote the standard orthonormal basis of $\R^N$, and let $\Phi$ be a spectral
  tetris frame. We will use the notation introduced above. We claim that $\Phi$ is divisible if and only if
  $e_j \in \Phi$ for all $j=1,\dots,N$. If $e_j \in \Phi$ for all $j=1,\dots,N$, then $\Phi\setminus
  \setsmall{e_j}_{j=1}^N$ and $ \setsmall{e_j}_{j=1}^N$ are tight frames hence $\Phi$ is divisible. This shows
  one direction of the claim. Now, assume that $e_{j_0} \notin \Phi$ for some $j_0=1,\dots,N$. In other words,
  we assume that $m_{j_0}=0$. We consider the two $2\times 2$ blocks $B_{j_0-1}$ and $B_{j_0}$. Let $i_0+l \in
  \set{1,\dots,M}$ for $l=0,1,2,3$ be the indices of the columns of $\Phi$ associated with $B_{j_0-1}$ and
  $B_{j_0}$. Let $\varphi^{j_0}$ denote the $j_0$th row of $\Phi$. Assume towards a contradiction that
  $\Phi=\Phi_1 \cup \Phi_2$ is divisible, where $\Phi_1=\set{\varphi_i}_{i\in I_1}$ and
  $\Phi_1=\set{\varphi_i}_{i \in I_2}$ are tight frames. The common support of the rows $\varphi^{j_0}$ and
  $\varphi^{j_0+1}$ is $\set{i_0+2,i_0+3}$. Therefore, owing to orthogonality requirements of the rows, the
  frame vectors $\varphi_{i_0+2}$ and $\varphi_{i_0+3}$ need to belong to the same divisor, say $\Phi_1$. This
  in turn means that $\varphi_{i_0}, \varphi_{i_0+1} \in \Phi_2$ since $\Phi_2$ otherwise cannot span
  $\R^N$. The square norm of the $j_0$th row of $\Phi_1$ is $a_{j_0}^2+a_{j_0}^2=r_{j_0}$, and the square norm
  of the $(j_0+1)$th row of $\Phi_1$ is at least $b_{j_0}^2+(-b_{j_0})^2=2-r_{j_0}$. Since $\Phi_1$ has equal
  row norm, this implies that $r_{j_0}\ge 2-r_{j_0}$, that is, $r_{j_0}\ge 1$ which contradicts
  $r_{j_0}\in\itvco{0}{1}$. Hence, $\Phi$ is prime. This completes the proof of the claim.

  In \cite{kytuniok2012} it is shown that $e_j \in \Phi$ for all $j=1,\dots, N$ if and only if
  \begin{equation}
    (j_0 -k_n) \lambda - \floor{(j_0-1-k_n)\lambda} \ge 3\label{eq:STF-cond-from-other-paper}
  \end{equation}
  for any $j_0$ such that $k_n+1 < j_0 < k_{n+1}$ for all $n=0,1,\dots, \gcd{\set{N,M}}$. However, if
  (\ref{eq:STF-cond-from-other-paper}) is satisfied for one $n=0,1,\dots, \gcd{\set{N,M}}$, then it is
  satisfied for all $n=0,1,\dots, \gcd{\set{N,M}}$. Hence, $e_j \in \Phi$ for all $j=1,\dots, N$ if and only
  if
  \begin{equation*}
    j_0 \lambda - \floor{(j_0-1)\lambda} \ge 3
  \end{equation*}
  for all $j_0$ satisfying $1=k_0+1<j_0 < k_1=N/\gcd{\set{N,M}}$.
\end{proof}

The prime FUNTF $\Phi_1$ from Theorem~\ref{thm:factorization-of-STF} is actually the output of the spectral
tetris algorithm with $M-LN$ vectors.  The frame bound of $\Phi_1$ is $\lambda_1=\frac{M-LN}{N}=\lambda-L$,
which might be strictly smaller than two.  It is not difficult to see that the spectral tetris construction
always works when $M \ge 2N-1$, but it is in general difficult to see for which $M$ and $N$ the construction
still works when $N< M < 2N-1$. The next result characterizes when this is indeed possible. We denote the
output of the spectral tetris algorithm $\STF(N,M)$ for all $N,M \in \N$. If the spectral tetris does not work
for the given $N$ and $M$, we set $\STF(N,M) =\emptyset$.

\begin{corollary}
\label{cor:spectr-tetris-redund-less-two}
Let $N, \tilde{M} \in \N$ be given such that $N < \tilde{M} < 2N$. Define ${M}=\tilde{M}+N$ and
$\lambda=M/N\ge 2$. Then $\STF(N,\tilde{M})$ is a FUNTF if and only if $\lambda, N$ and $M$ satisfies
(\ref{eq:spec-tetris-cond}).
\end{corollary}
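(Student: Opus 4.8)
The plan is to reduce the corollary to Theorem~\ref{thm:factorization-of-STF} by viewing $\STF(N,\tilde M)$ as the prime factor obtained from $\STF(N,M)$ after deleting a single copy of the standard orthonormal basis $E_N$. First I would record two elementary observations. Since $N<\tilde M$, we have $\lambda = M/N = \tilde M/N + 1 \ge 2$, so $\STF(N,M)$ is always a well-defined FUNTF by the spectral tetris algorithm. And since $M = \tilde M + N$, we have $\gcd(N,M)=\gcd(N,\tilde M)$, so the index set $K$ and the membership of each $j-1$ in $K$ are identical for the two constructions. This last point is what makes the recursions (\ref{eq:m-and-r-in-K}) and (\ref{eq:m-and-r--not-in-K}) directly comparable across the two values.

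The core step is to compare the parameters $(m_j,r_j)$ produced by the algorithm for eigenvalue $\lambda$ with those $(\tilde m_j,\tilde r_j)$ produced for $\tilde\lambda = \lambda - 1 = \tilde M/N$. Proceeding by induction on $j$ and using that (\ref{eq:m-and-r-in-K}) and (\ref{eq:m-and-r--not-in-K}) determine $m_j\in\N_0$ and $r_j\in\itvco{0}{1}$ uniquely as integer and fractional parts of the relevant quantity, I would show that subtracting $1$ from $\lambda$ leaves every remainder unchanged and lowers every multiplicity by one, i.e.\ $\tilde r_j = r_j$ and $\tilde m_j = m_j - 1$. The only place this can break down is the requirement $\tilde m_j = m_j - 1 \ge 0$; hence the algorithm for $\tilde M$ succeeds, producing a genuine FUNTF with all $2\times 2$ blocks real, if and only if $m_j \ge 1$ for every $j=1,\dots,N$. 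A consistency check confirms the bookkeeping: deleting one $e_j$ per row block removes $N$ columns, i.e.\ exactly one copy of $E_N$, matching $\tilde M = M-N$.

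It then remains to connect ``$m_j\ge 1$ for all $j$'' to the stated condition. But $m_j\ge 1$ means precisely that the basis vector $e_j$ occurs among the columns of $\STF(N,M)$, so this is exactly the statement $e_j\in\STF(N,M)$ for all $j=1,\dots,N$. By the proof of Theorem~\ref{thm:factorization-of-STF}, that is equivalent to $\STF(N,M)$ being divisible, which in turn holds if and only if $\lambda,N,M$ satisfy (\ref{eq:spec-tetris-cond}). Chaining the equivalences gives $\STF(N,\tilde M)$ is a FUNTF $\iff$ $m_j\ge 1$ for all $j$ $\iff$ $\STF(N,M)$ is divisible $\iff$ (\ref{eq:spec-tetris-cond}) holds, which is the assertion; in the divisible case, removing the single basis $E_N$ from $\STF(N,M)$ returns exactly $\STF(N,\tilde M)$, consistent with the factorization in the theorem.

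I expect the main obstacle to be the inductive comparison of the two recursions: one must verify that decreasing $\lambda$ by $1$ truly preserves each $r_j$ (so that the blocks, governed by $a_j=\sqrt{r_j/2}$ and $b_j=\sqrt{1-r_j/2}$, are literally identical) and shifts only the integer multiplicities $m_j$, with the sole failure mode being a multiplicity dropping below zero. Handling the ``otherwise'' case (\ref{eq:m-and-r--not-in-K}), where the carry term $2-r_{j-1}$ couples consecutive rows, is the delicate part: I must ensure the induction hypothesis $\tilde r_{j-1}=r_{j-1}$ propagates and that $\tilde\lambda = (2-r_{j-1}) + \tilde m_j + \tilde r_j$ remains the valid unique decomposition with $\tilde r_j\in\itvco{0}{1}$.
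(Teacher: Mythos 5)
Your proof is correct and follows essentially the same route as the paper: the corollary is deduced from Theorem~\ref{thm:factorization-of-STF} by identifying $\STF(N,\tilde{M})$ with the prime factor $\Phi_1$ that remains after deleting one copy of $E_N$ from $\STF(N,M)$, so that ``$\STF(N,\tilde{M})$ is a FUNTF'' becomes equivalent to ``$\STF(N,M)$ is divisible'' and hence to (\ref{eq:spec-tetris-cond}). The only difference is one of detail: the paper simply asserts that $\Phi_1$ is the output of the spectral tetris algorithm with $M-LN$ vectors, whereas you actually verify this (in the relevant case $L=1$) by the inductive comparison $\tilde{r}_j=r_j$, $\tilde{m}_j=m_j-1$ of the two recursions, with failure exactly when some $m_j=0$, i.e.\ when some $e_j\notin\STF(N,M)$.
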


We mention that Corollary~\ref{cor:spectr-tetris-redund-less-two} was subsequently and independently proved in
\cite{chkwz12}.

\begin{remark}
\begin{compactenum}[(a)]
\item Let $N,M \in \N$, and let $D$ be the common divisors of $N$ and $M$. By
  Corollary~\ref{cor:spectr-tetris-redund-less-two} we see that $\STF(N,M)$ produces a FUNTF if and only if
  $M\ge 2N-1$ or $M=2N-d$ for some $d\in D$. This result is somewhat disappointing since it implies that spectral tetris frames with redundancy below two only are realizable by taking unions of spectral tetris frames for subspaces, where the number of vectors for each subspace frame is two times the subspace dimension minus one. 
\item Consider the frame $\STF(4,11)$ from (\ref{eq:STF-4-11}). Since $\lambda=11/4, N=4$, and $M=11$
  satisfies (\ref{eq:spec-tetris-cond}), it follows from Corollary~\ref{cor:spectr-tetris-redund-less-two}
  that $\STF(4,\tilde{M})$ with $\tilde{M}=M-N=7$ produces a FUNTF. Moreover, we see that
  $\STF(4,11)=\STF(4,7) \union E_4$.
\end{compactenum}
\end{remark}

\section*{Acknowledgments}
The authors would like to thank Jameson Cahill for valuable discussions. K.~A.~Okoudjou was supported by ONR
grants: N000140910324 \& N000140910144, and by the Alexander von Humboldt foundation. He would also like to
express his gratitude to the Institute for Mathematics at the University of Osnabr\"uck for its hospitality
while part of this work was completed.


\begin{thebibliography}{20}
\bibitem{fullspark}
B.~Alexeev, J.~Cahill, D.~G.~Mixon, \emph{Full spark frames}, preprint. 

\bibitem{bf03}
J.~J.~Benedetto, M.~Fickus, \emph{Finite normalized tight frames},
Adv.\ Comput.\ Math.\ \textbf{18} (2003), 357--385.

\bibitem{beko} 
J.~J.~Benedetto, J.~D.~Kolesar, \emph{Geometric properties of Grassmannian frames for $\R^2$
    and $\R^3$}, EURASIP J.\ Applied Signal Processing, {\bf 2006}, pp. 1--17.

\bibitem{cchkp} R.~Calderbank, P.~G.~Casazza, A.~Heinecke, 
G.~Kutyniok, A.~Pezeshki, {\it Fusion frames: existence and
  construction},  Adv.\ Comput.\ Math.\ {\bf 35} (2011), 1--31.

\bibitem{cfmwz11}
P.~G.~Casazza, M.~Fickus, D.~G.~Mixon, Y.~Wang, and Z.~Zhou, {\it Constructing tight fusion frames,} Appl.\ Comput.\ Harmon.\ Anal.\ {\bf  30} (2011), 175--187.

\bibitem{chkwz12}
P.~Casazza, A.~Heinecke, K.~Kornelson, Y.~Wang, and Z.~Zhou, {\it Necessary and sufficient conditions to perform Spectral Tetris,} preprint.

\bibitem{cako03}
P.~G.~Casazza and J.~Kova\v{c}evi\'{c},
\emph{ Equal-norm tight frames with erasures},
Adv.~ Comput.~ Math. {\bf 18} (2003), 387--430.

\bibitem{Chien:2011}
T.-Y.~Chien and S.~Waldron, \emph{A classification of the harmonic
  frames up to unitary equivalence}, Appl.\ Comput.\ Harmon.\ Anal. {\bf 30}
(2011), 307--318.

\bibitem{chris03}
O.~Christensen, ``An Introduction to Frames and Riesz Bases,'' Applied and Numerical Harmonic Analysis. Birkh\"auser Boston, Inc., Boston, MA, 2003. 

\bibitem{Goyal:2001aa}
V.~K.~Goyal, J.~Kova\v{c}evi\'{c}, and J.~A.~Kelner,
\emph{ Quantized frame expansions with erasures},
 Appl.~ Comput.~ Harmon.~ Anal. {\bf 10}, 203--233 (2001).

\bibitem{koche1}
J.~Kova\v{c}evi\'{c} and A.~Chebira,
\emph{ Life Beyond Bases: The Advent of Frames (Part I)},
  IEEE Signal Proces.\ Magazine 
 {\bf 24} (2007), 86--104. 

\bibitem{koche2}
J.~Kova\v{c}evi\'{c} and A.~Chebira,
 \emph{Life Beyond Bases: The Advent of Frames (Part II)},
 IEEE Signal Proces.\ Magazine {\bf 24} (2007), 115--125. 

\bibitem{kytuniok2012}
G.~Kutyniok, F.~Krahmer, and J.~Lemvig, \emph{Sparsity and spectral properties of dual frames}, preprint.

\bibitem{lamle00}
T.~Y.~Lam and K.~H.~Leung, {\it On vanishing sums of roots of unity,} J.~Algebra {\bf 224}   (2000), 91--109.

\bibitem{sivek10}
G.~Sivek, \emph{ On vanishing sums of distinct roots of unity,} Integers {\bf 10} (2010), 365--368. 

\bibitem{strawn-frame-varieties}
N.~Strawn, \emph{Finite frame varieties: Nonsingular points, tangent spaces, and explicit local parametrization}, J.~Fourier Anal.\ Appl.\ {\bf 17}  (2011),
821--853. 

\bibitem{Strohmer:2003aa}
T.~Strohmer and R.~W.~Heath,
 \emph{Grassmannian frames with applications to coding and communication},
 Appl.\ Comput.\ Harmon.\ Anal.\ {\bf 14}  (2003), 257--275.

\bibitem{Welch:1974aa}
L.~R. Welch,
\emph{Lower bounds on the maximum cross correlation of signals},
 IEEE Trans.~ Inform.~ Theory {\bf 20} (1974), 397--399.
 
 \bibitem{xzg05}
P.~Xia, S.~Zhou, and G.~B.~Giannakis, 
\emph{Achieving the Welch bound with difference sets,}  
IEEE Trans.~Inform.~ Theory {\bf 51} (2005), no.\ 5, 1900--1907. 


\end{thebibliography}
\end{document}